\newcommand{\N}{\ensuremath{\mathbb N}}
\newcommand{\Z}{\ensuremath{\mathbb Z}}
\newtheorem{thm}{Theorem}
\newtheorem{prop}[thm]{Proposition}
\newtheorem{rem}[thm]{Remark}
\newcommand{\eop}{%
  \relax
  \ifvmode[i)]
    \noindent
  \else
    \unskip
    \hskip0pt plus-1fill\relax
  \fi
  \vrule width0pt
  \nobreak
  \hfill
  {\hspace*{\fill}\vrule width3pt height8pt depth0pt}%
}
\begin{document}

\title{Groups of virtual and welded links}

\author[Bardakov]{Valeriy G. Bardakov}
\address{Sobolev Institute of Mathematics, Novosibirsk 630090, Russia}
\email{bardakov@math.nsc.ru}

\author[Bellingeri]{Paolo Bellingeri}
\address{Laboratoire de Math\'ematiques Nicolas Oresme, CNRS UMR 6139, Universit\'e de Caen BP 5186,  F-14032 Caen, France.
}
\email{paolo.bellingeri@unicaen.fr}


\subjclass{Primary 20F36}

\keywords{Braid groups, virtual braid, welded braids, virtual knots, welded knots, group of knot}


\begin{abstract}
We define new notions of  groups of virtual and
 welded knots (or links)
 and we study their relations with other invariants, in particular the
Kauffman group of a virtual knot. 
\end{abstract}

\maketitle

\section{introduction}

Virtual knot theory has been  introduced by Kauffman \cite{Ka}
as a generalization of classical knot theory.  Virtual knots (and links) are  represented as generic immersions of circles in the plane (virtual link diagrams)
where double points can be classical (with the usual information on overpasses and underpasses) or virtual.
Virtual link diagram are equivalent under ambient isotopy and some  types of local moves (generalized Reidemeister moves): classical
Reidemeister moves (Figure \ref{reidcl}), virtual Reidemeister  moves and mixed Reidemeister moves (Figures \ref{reidvirt} and
 \ref{reidmix}).

\begin{figure}[h]
 \centering
 \includegraphics[width=12cm,]{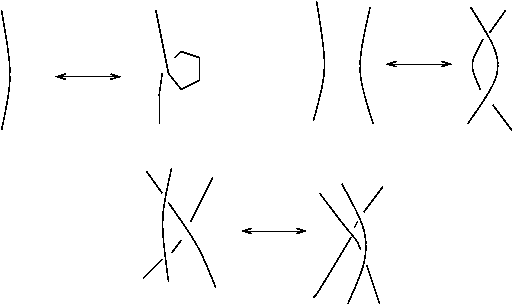}
 \caption{Classical Reidemeister moves} \label{reidcl}
\end{figure}

 \begin{figure}[h]
 \centering
 \includegraphics[width=11cm,]{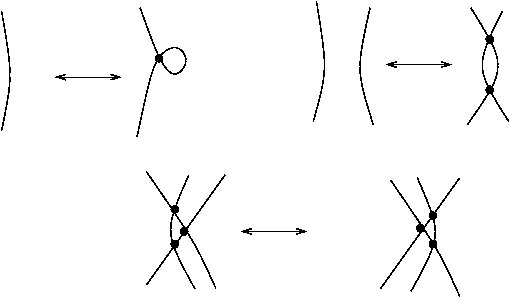}
  \caption{Virtual Reidemeister moves } \label{reidvirt}
\end{figure}

\begin{figure}[h]
\centering
 \includegraphics[width=7cm,]{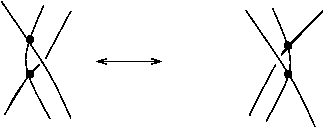}
\caption{Mixed Reidemeister moves} \label{reidmix}
\end{figure}

A Theorem of Goussarov, Polyak and Viro \cite[Theorem 1B]{GPV} states that if two classical knot diagrams are
equivalent under generalized Reidemeister moves, then they are equivalent under the classical
Reidemeister moves. In this sense virtual link theory is a nontrivial extension of  classical
theory. This Theorem is a straightforward consequence  of the fact that the knot group (more precisely the \emph{group system} of a knot, see for instance
\cite{CMG,J})
is a complete knot invariant which can be naturally extended in the realm of virtual links. Nevertheless this notion of invariant does not appear satisfactory for virtual objects (see Section~\ref{gvl}): the main goal of this paper is to explore new invariants for virtual links using braids and their virtual generalizations.

In fact, using virtual generalized Reidemeister moves we can introduce  a notion of ``virtual'' braids
(see for instance \cite{Ka,Ver01}). Virtual braids on $n$ strands form a group, usually denoted by $VB_n$.
The relations between virtual braids and virtual knots (and links) are completely
determined by a generalization
of Alexander and  Markov Theorems~\cite{K}.

To the generalized Reidemeister moves on  virtual diagrams one could add the following local  moves, called {\it forbidden moves} of type $F1$ and $F2$  (Figure \ref{reidforb}):

\begin{figure}[h]
\centering
 \includegraphics[width=12cm,]{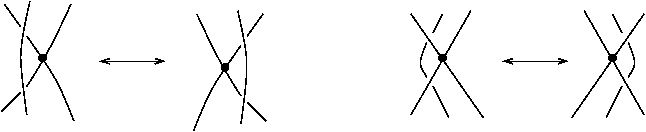}
\caption{Forbidden moves of type $F1$ (on the left) and type $F2$ (on the right)} \label{reidforb}
\end{figure}

 We can include one or both of them to obtain a "quotient" theory of the theory of
virtual links. If we allow  the move $F1$, then we obtain the theory of {\it Welded links} whose interest is growing up recently, in particular because of the fact that the welded braid counterpart can be defined in several equivalent ways (for instance in terms of configuration spaces, mapping classes and  automorphisms of free groups). The theory with
both forbidden moves added is called the theory of {\it Fused links}   but this theory is trivial, at least at the level of knots, since any knot  is equivalent to the trivial knot  \cite{Kan, Nel}.

The paper is organized as follows:
in Sections~\ref{intro} and~\ref{gl}   we recall some definitions and classical
results and  we construct a representation of $VB_n$ into $Aut \, F_{n+1}$;
using this representation we define (Section~\ref{gvl}) a new notion of group of a virtual knot (or link) and we compare our invariant to other
known invariants.
In the case of welded objects (Section~\ref{gwl})  our construction gives an invariant which is a straightforward generalization to welded knots
of Kauffman notion of group of virtual knots. We conclude with some observations on the analogous of Wada groups in the realm of welded links.

\medskip

{\bf Acknowledgements.}
The research of the first author was partially supported by  RFBR-10-01-00642.
The research of the second  author was partially supported  by  French grant ANR-11-JS01-002-01.

This  work started during the  staying of the
second author at the University of Caen, december 2010, in the framework  of the French - Russian grant 10-01-91056.
The first author would like to thank the members of the Laboratory  of
Mathematics of the University of Caen for their kind hospitality.


\section{Braids vs  virtual and welded braids} \label{intro}

 During last twenty years several  generalizations of  braid groups  were
defined and studied, according to their definition as "diagrams" in the plane:
in particular singular braids \cite{Ba},  virtual braids \cite{Ka, Ver01} and welded
braids \cite{FRR}.

It is worth to mention that  for all of these generalizations it exists an Alexander-like theorem, stating that
any singular (respectively virtual or welded) link  can be represented as the closure of a singular (respectively virtual or welded) braid. Moreover, there are generalizations of
classical Markov's theorem for braids  giving a characterization for two singular (respectively virtual or welded) braids whose closures represent the same singular (respectively virtual or welded) link \cite{G,K}.

In the following we introduce virtual and welded braid groups as quotients of free product of braid groups and corresponding symmetric groups.

Virtual and welded  braid groups have  several other definitions, more intrinsic, see for instance \cite{Bn,K}
 for the virtual case and \cite{BH,FRR,K} for the welded one.

The braid group $B_n$, $n\geq 2$, on $n$ strings can be defined as
the  group generated by $\sigma_1,\sigma_2,\ldots,\sigma_{n-1}$,
with the defining relations:
\begin{equation}
\sigma_i \, \sigma_{i+1} \, \sigma_i = \sigma_{i+1} \, \sigma_i \, \sigma_{i+1},~~~
i=1,2,\ldots,n-2, 
\nonumber
\end{equation}
\begin{equation}
\sigma_i \, \sigma_j = \sigma_j \, \sigma_i,~~~|i-j|\geq 2. 
\nonumber
\end{equation}

The virtual braid group $VB_n$ can be defined as the group generated by the elements $\sigma_i$, $\rho_i$, $i = 1, 2, \ldots, n-1$ with the defining relations:
\begin{equation}
\sigma_i \, \sigma_{i+1} \, \sigma_i = \sigma_{i+1} \, \sigma_i \, \sigma_{i+1},~~~
i=1,2,\ldots,n-2, 
\nonumber
\end{equation}
\begin{equation}
\sigma_i \, \sigma_j = \sigma_j \, \sigma_i,~~~|i-j|\geq 2. 
\nonumber
\end{equation}
\begin{equation}
\rho_i \, \rho_{i+1} \, \rho_i = \rho_{i+1} \, \rho_i \, \rho_{i+1},~~~
i=1,2,\ldots,n-2, 
\nonumber
\end{equation}
\begin{equation}
\rho_i \, \rho_j = \rho_j \, \rho_i,~~~|i-j|\geq 2. 
\nonumber
\end{equation}
\begin{equation}
\rho_i^2 =1,~~~~
i=1,2,\ldots,n-1; 
\nonumber
\end{equation}
\begin{equation}
\sigma_{i} \, \rho_{j} = \rho_{j} \, \sigma_{i},~~~|i-j| \geq 2, \qquad
\label{eq20}
\nonumber
\end{equation}
\begin{equation}
\rho_{i} \, \rho_{i+1}  \, \sigma_{i} = \sigma_{i+1} \, \rho_{i}  \, \rho_{i+1},~~~
i=1,2,\ldots,n-2.
\label{eq21}
\nonumber
\end{equation}

It is easy to verify that $\rho_i$'s generate
the symmetric group
$S_n$ and that
 the $\sigma_i$'s generate the braid group $B_n$ (see Remark \ref{weldedvirtualembedding}).

In  \cite{GPV} it was proved that the  relations
$$
\rho_{i} \, \sigma_{i+1}  \, \sigma_{i} = \sigma_{i+1} \, \sigma_{i}
\, \rho_{i+1},~~~~~~~~~~~~
\rho_{i+1} \, \sigma_{i}  \, \sigma_{i+1} = \sigma_{i}
\, \sigma_{i+1} \, \rho_{i}
$$
corresponding to the forbidden moves F1 and F2 for virtual link diagram, are not fulfilled in $VB_n$.

According to \cite{FRR} the welded braid group $WB_n$  is generated by $\sigma_i$, $\alpha_i$, $i=1, 2, \ldots, n-1$.
Elements
$\sigma_i$ generate the braid group $B_n$ and elements
$\alpha_i$  generate the symmetric group $S_n$, and the following mixed relations hold
\begin{equation}
\alpha_{i}  \, \sigma_{j} = \sigma_{j} \, \alpha_{i}, ~~~|i - j| \geq 2, \label{eq22}
\nonumber
\end{equation}
\begin{equation}
\alpha_{i+1} \, \alpha_{i} \, \sigma_{i+1} = \sigma_{i} \, \alpha_{i+1} \, \alpha_i,
~~~ i=1,2,\ldots,n-2, \label{eq23}
\nonumber
 \end{equation}
 \begin{equation}
\alpha_{i} \, \sigma_{i+1} \, \sigma_{i} = \sigma_{i+1} \, \sigma_{i} \, \alpha_{i+1},~~~ i=1,2,\ldots,n-2.
 \label{eq24}
\nonumber
\end{equation}

Comparing the defining relations of $VB_n$ and $WB_n$,  we see that
the group presentation of $WB_n$ can be obtained from the group presentation of $VB_n$
replacing  $\rho_i$ by $\alpha_i$ and adding  relations of type $
\alpha_{i} \, \sigma_{i+1} \, \sigma_{i} = \sigma_{i+1} \, \sigma_{i} \, \alpha_{i+1},~~~ i=1,2,\ldots,n-2
$ which are related to $F1$ moves.

Notice that  if we add to relations of $VB_n$ the relations related to  $F2$ moves:
 \begin{equation}
\rho_{i+1} \, \sigma_{i} \, \sigma_{i+1}=\sigma_{i} \, \sigma_{i+1} \, \rho_i  ,
~~~ i=1,2,\ldots,n-2, \label{eq25}
\nonumber
 \end{equation}
 we get a group, $WB'_n$, which is isomorphic to $WB_n$: this isomorphism is given by the map
$\iota_n : WB'_n \to WB_n$ that
sends   $\rho_i$ in  $\alpha_i$ and $\sigma_i$ in $\sigma_i^{-1}$.

\vspace{0.5cm}


\section{Braids as automorphisms of free groups and generalizations}\label{gl}

As remarked by Artin, the braid group $B_n$ may be represented as a subgroup of ${\rm Aut}(F_n)$
by associating to
any  generator $\sigma_i$, for $i=1,2,\ldots,n-1$, of $B_n$ the following
automorphism of $F_n$:
$$
\sigma_{i} : \left\{
\begin{array}{ll}
x_{i} \longmapsto x_{i} \, x_{i+1} \, x_i^{-1}, &  \\ x_{i+1} \longmapsto
x_{i}, & \\ x_{l} \longmapsto x_{l}, &  l\neq i,i+1.
\end{array} \right.
$$

Artin proved a stronger result (see for instance~\cite[Theorem 5.1]{LH}), by giving a characterization of
braids as automorphisms of free groups. He proved that
 any automorphism
$\beta $ of ${\rm Aut}(F_n)$ \footnote{In the following we will consider the action of (classical, virtual or  welded) braids from left to  right  and $\beta_1 \beta_2 (x_i)$ will denote  $((x_i) \beta_1) \beta_2.$}  corresponds to an element of  $B_n$
if and only if $\beta $ satisfies  the following conditions:
\begin{eqnarray*}
& i) & \beta(x_i) = a_i^{-1} \, x_{\pi(i)} \, a_i,~~1\leq i\leq n,\\
& ii)& \beta(x_1x_2 \ldots x_n)=x_1x_2 \ldots x_n,
\end{eqnarray*}
where $\pi \in S_n$
and $a_i \in F_n$.

The group of conjugating automorphisms
$C_n$ consists  of automorphisms satisfying the first condition.
In \cite{FRR} it was proved that $WB_n$ is isomorphic to  $C_n$ and therefore the group  $WB_n$  can be also considered as a subgroup of  $\mbox{Aut}(F_n)$. More precisely the generators $\sigma_1,\ldots, \sigma_{n-1}$ of $WB_n$
correspond to  previous automorphisms of $F_n$ while any generator $\alpha_i$, for $i=1,2,\ldots,n-1$
is associated  to the following automorphism of $F_n$:
$$
\alpha_{i} : \left\{
\begin{array}{ll}
x_{i} \longmapsto   x_{i+1}  &  \\ x_{i+1} \longmapsto
x_{i}, & \\ x_{l} \longmapsto x_{l}, &  l\neq i,i+1.
\end{array} \right.
$$

\begin{rem}\label{weldedvirtualembedding}
As noticed by Kamada \cite{Ka}, the above representation of  $WB_n$ as conjugating automorphisms
and the fact   $WB_n$  is a quotient of $VB_n$ imply that  the $\sigma_i$'s generate the braid group $B_n$ in $VB_n$.
\end{rem}

On the other hand the construction of an embedding  of $VB_n$  into
$\mbox{Aut}(F_m)$ for some $m$ remains an open problem.

\medskip

\begin{thm}\cite{Bar-1}  \label{theorem3}
 There is a representation $\psi$ of $VB_n$ in $\mbox{Aut}(F_{n+1})$,
$F_{n+1} = \langle x_1, x_2, \ldots, x_n, y \rangle$  which is defined by the following actions on
the generators of $VB_n$:
$$
\psi(\sigma_{i}) : \left\{
\begin{array}{ll}
x_{i} \longmapsto x_{i} \, x_{i+1} \, x_i^{-1}, &  \\
x_{i+1} \longmapsto x_{i}, &\\
x_{l} \longmapsto x_{l}, ~~~  l\neq i,i+1; & \\
y \longmapsto y,
\end{array} \right.
~~~
\psi(\rho_{i}) : \left\{
\begin{array}{ll}
x_{i} \longmapsto y \, x_{i+1} \, y^{-1}, &  \\
x_{i+1} \longmapsto y^{-1} \, x_{i} \, y, &\\
x_{l} \longmapsto x_{l},~~~   l\neq i,i+1, \\
y \longmapsto y,
\end{array} \right.
$$
for all $i = 1, 2, \ldots, n-1.$
\end{thm}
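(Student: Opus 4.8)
The plan is to show that $\psi$ extends to a well-defined homomorphism $VB_n \to \mathrm{Aut}(F_{n+1})$ in two stages: first that each $\psi(\sigma_i)$ and each $\psi(\rho_i)$ really is an automorphism of $F_{n+1}$, and then that the assignment respects every defining relation of $VB_n$. Once both are checked, the universal property of a group presented by generators and relations (von Dyck's theorem) produces the homomorphism $\psi$, which is then a representation by construction.

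For the first stage, each displayed map is an endomorphism of $F_{n+1}$ because it is prescribed on the free generators $x_1,\dots,x_n,y$. To see that each is invertible it suffices to exhibit an inverse endomorphism. For $\psi(\sigma_i)$ this is the usual inverse of the Artin generator (sending $x_i \mapsto x_{i+1}$, $x_{i+1}\mapsto x_{i+1}^{-1} x_i x_{i+1}$, and fixing the remaining $x_l$ and $y$); indeed, on the subgroup $\langle x_1,\dots,x_n\rangle \cong F_n$ the maps $\psi(\sigma_i)$ are exactly Artin's automorphisms, extended by $y \mapsto y$. For $\psi(\rho_i)$ a short computation gives $\psi(\rho_i)^2 = \mathrm{id}$, so $\psi(\rho_i)$ is an involutive automorphism (this simultaneously verifies the relation $\rho_i^2 = 1$).

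For the second stage the relations split into three groups. The braid relations and far commutativity among the $\sigma_i$ hold because $\psi$ restricted to the $\sigma_i$ is Artin's representation of $B_n$ on $\langle x_1, \dots, x_n \rangle$, extended by $y\mapsto y$. For the relations involving only the $\rho_i$ I would use the following observation, which dispatches them all at once: let $\tau_i \in \mathrm{Aut}(F_{n+1})$ be the automorphism interchanging $x_i$ and $x_{i+1}$ and fixing every other generator together with $y$, and let $\phi \in \mathrm{Aut}(F_{n+1})$ be defined by $\phi(x_j) = y^{j} x_j y^{-j}$ for $1 \le j \le n$ and $\phi(y) = y$. A direct check (with the right-action convention of the paper) shows that $\psi(\rho_i) = \phi^{-1} \tau_i \phi$ for every $i$. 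Since the $\tau_i$ are the standard permutation automorphisms and hence generate a copy of $S_n$, their simultaneous conjugates $\psi(\rho_i)$ satisfy all the $S_n$-relations automatically, namely $\rho_i^2 = 1$, the braid relation $\rho_i \rho_{i+1} \rho_i = \rho_{i+1}\rho_i\rho_{i+1}$, and $\rho_i\rho_j = \rho_j\rho_i$ for $|i-j|\ge 2$. The mixed far commutativity $\sigma_i \rho_j = \rho_j \sigma_i$ for $|i-j|\ge 2$ is immediate, since the two automorphisms then move disjoint sets of generators.

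The main obstacle is the remaining mixed relation $\rho_i\rho_{i+1}\sigma_i = \sigma_{i+1}\rho_i\rho_{i+1}$, the only one entangling the $y$-conjugations of the $\rho$'s with the conjugation pattern of the $\sigma$'s. Here I would first compute the composite $\psi(\rho_i\rho_{i+1})$ and record that it sends $x_i \mapsto y^2 x_{i+2} y^{-2}$, $x_{i+1}\mapsto y^{-1} x_i y$ and $x_{i+2}\mapsto y^{-1} x_{i+1} y$ (all other generators and $y$ being fixed), and then apply $\psi(\sigma_i)$. Comparing with the effect of applying $\psi(\sigma_{i+1})$ first and then $\psi(\rho_i\rho_{i+1})$, both sides are seen to send $x_i \mapsto y^2 x_{i+2} y^{-2}$, $x_{i+1}\mapsto y^{-1} x_i x_{i+1} x_i^{-1} y$ and $x_{i+2}\mapsto y^{-1} x_i y$. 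The verification is routine, but the bookkeeping of the powers of $y$ is exactly where care is needed, so this computation is the part I expect to be most delicate.
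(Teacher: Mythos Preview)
The paper does not actually prove this theorem; it is quoted from \cite{Bar-1} (and noted to have been independently obtained in \cite{M}), so there is no in-paper argument to compare against. Your proof is correct and complete: verifying that each defining relation of $VB_n$ is respected in $\mathrm{Aut}(F_{n+1})$ is precisely what is needed, and your computation for the mixed relation $\rho_i\rho_{i+1}\sigma_i = \sigma_{i+1}\rho_i\rho_{i+1}$ is accurate under the paper's right-action convention (both sides send $x_i\mapsto y^2 x_{i+2} y^{-2}$, $x_{i+1}\mapsto y^{-1}x_i x_{i+1}x_i^{-1}y$, $x_{i+2}\mapsto y^{-1}x_i y$, as you claim).

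The conjugation observation $\psi(\rho_i)=\phi^{-1}\tau_i\phi$ with $\phi(x_j)=y^{j}x_j y^{-j}$ is a genuinely nice touch: it disposes of all the symmetric-group relations among the $\rho_i$ in one stroke and explains conceptually why the $\psi(\rho_i)$ generate a copy of $S_n$ inside $\mathrm{Aut}(F_{n+1})$. A bare verification (as one would expect in \cite{Bar-1}) would instead check the braid relation $\rho_i\rho_{i+1}\rho_i=\rho_{i+1}\rho_i\rho_{i+1}$ directly on generators, which is straightforward but less illuminating. Your approach thus buys a cleaner structural picture at no real cost.
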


\medskip

This  representation was independently  considered in  \cite{M}.

Remark that the group $WB_{n}$ can be considered as a quotient of  $\psi(VB_n)$: in fact a straightforward verification shows that:

 \begin{prop}\label{virtualvswelded}
 Let $q_n: VB_n \to  WB_n$ be the  projection defined by $q_n(\sigma_i)=\sigma_i$ and
  $q_n(\rho_i)=\alpha_i$ for $i=1, \ldots, n$.
 Let $F_{n+1} = \langle x_1, x_2, \ldots, x_n, y \rangle$  and $F_{n} =  \langle x_1, x_2, \ldots, x_n \rangle$.
 The projection $p_n: F_{n+1} \to F_{n+1} / \langle\langle y \rangle  \rangle\simeq F_n$ induces a map
$p_n^\# : \psi(VB_n) \to  WB_{n}$ such that  $p_n^\# \circ \psi= q_n$.
 \end{prop}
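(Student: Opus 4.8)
The plan is to exploit the fact that every automorphism in the image $\psi(VB_n)$ fixes the last generator $y$. First I would observe that for each generating automorphism $\psi(\sigma_i)$ and $\psi(\rho_i)$ one has $y\mapsto y$; since a product of automorphisms each fixing $y$ again fixes $y$, every element $\phi\in\psi(VB_n)$ satisfies $\phi(y)=y$. Because $\phi$ is an automorphism of $F_{n+1}$, it carries the normal closure $N=\langle\langle y\rangle\rangle$ onto the normal closure of $\phi(y)=y$, whence $\phi(N)=N$. Consequently $\phi$ descends to a well-defined automorphism $\bar\phi$ of the quotient $F_{n+1}/N\simeq F_n$, and the assignment $\phi\mapsto\bar\phi$ is a group homomorphism $\psi(VB_n)\to\mathrm{Aut}(F_n)$, which I take to be $p_n^\#$. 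By construction this is exactly the map induced by $p_n$ on automorphisms, so compatibility with the projection $p_n$ is automatic.

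Next I would identify the induced automorphisms on the generators. Reading $\psi(\sigma_i)$ and $\psi(\rho_i)$ modulo $N$ amounts to setting $y=1$. For $\psi(\sigma_i)$ this leaves $x_i\mapsto x_i x_{i+1} x_i^{-1}$, $x_{i+1}\mapsto x_i$ and $x_l\mapsto x_l$ unchanged, which is precisely the Artin automorphism attached to $\sigma_i$ in $C_n$. For $\psi(\rho_i)$ the conjugations by $y$ disappear and one is left with $x_i\mapsto x_{i+1}$, $x_{i+1}\mapsto x_i$, $x_l\mapsto x_l$, which is exactly the automorphism attached to $\alpha_i$. Under the isomorphism $WB_n\simeq C_n$ of \cite{FRR} recalled above, these are the images $q_n(\sigma_i)$ and $q_n(\rho_i)$; in particular the image of $p_n^\#$ lands inside $C_n\simeq WB_n$.

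Finally I would conclude by the usual generators-and-relations argument. Both $q_n$ and $p_n^\#\circ\psi$ are homomorphisms out of $VB_n$, and the previous step shows they agree on the generating set $\{\sigma_i,\rho_i\}_{i=1}^{n-1}$; since these generate $VB_n$, the two homomorphisms coincide, giving $p_n^\#\circ\psi=q_n$. The computation is entirely routine, as the statement itself advertises; the only points demanding (minor) attention are checking that $N$ is genuinely invariant---so that $p_n^\#$ is well defined directly on $\psi(VB_n)$ rather than merely on $VB_n$---and matching the two reduced automorphisms with the $WB_n$ generators through the identification $WB_n\simeq C_n$. Neither presents a real obstacle.
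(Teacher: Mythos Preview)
Your argument is correct and is precisely the ``straightforward verification'' the paper alludes to without spelling out; the paper gives no detailed proof of this proposition, so there is nothing further to compare.
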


The faithfulness of the representation given in Theorem \ref{theorem3}
is evident  for $n=2$ since in this case $VB_2 \simeq C_2 \simeq \mathbb{Z} * \mathbb{Z}_2$.
In fact,  from the defining relations it follows  that $VB_2 \simeq C_2$ and if we consider composition of $\psi$ with $p_2 : F_3 = \langle x_1, x_2, y \rangle \to F_2 = \langle x_1, x_2 \rangle$ we get the representation of $C_2$ by
automorphisms of $F_2.$

For $n>2$  we do not know if above representation is faithful:
since
 $\psi(VB_n) \subseteq WB_{n+1}$ the faithfulness of $\psi$ for any $n$ would imply that virtual braid groups can be considered as subgroup of welded groups, whose structure and applications in finite type invariants theory is much more advanced (see for instance \cite{Bn,BerP}).

 We remark also that a quite tedious computation shows that  image by $\psi$ of the \emph{Kishino braid} :
  $Kb= \sigma_2 \sigma_1 \rho_2 \sigma_1^{-1} \sigma_2^{-1}
 \rho_1   \sigma_2^{-1}  \sigma_1^{-1} \rho_2 \sigma_1 \sigma_2 \sigma_2 \sigma_1 \rho_2 \sigma_1^{-1} \sigma_2^{-1}
 \rho_1   \sigma_2^{-1}  \sigma_1^{-1} \rho_2 \sigma_1 \sigma_2$
  is non trivial while
its Alexander invariant is trivial  \cite{Bnweb}.

Notice that
$Kb= \sigma_2^{-1} b_1^2 \sigma_2$ where
$b_1 = \sigma_2^2 \sigma_1 \rho_2 \sigma_1^{-1} \sigma_2^{-1} \rho_1 \sigma_2^{-1} $ $\sigma_1^{-1} \rho_2 \sigma_1$.


\section{Groups of virtual links}\label{gvl}

In the classical case the group of a link $L$ is defined as the fundamental group $\pi_1(S^3 \setminus N(L))$ where
$N(L)$ is
a tubular neighborhood of the link in $S^3$.  To find a group presentation of this group  we can use Wirtinger
method as follows.

One can consider the oriented diagram of the link as the union of  oriented arcs in the plane.
Define a base point  for $\pi_1(S^3 \setminus N(L))$ and associate to any arc
a loop starting from the base point, which  goes straight to the chosen arc, encircles it with  linking number $+1$
and returns straight to the base point. Let us consider the loops $a_i, a_j, a_k$ around three arcs in a crossing of the diagram as in Figure \ref{wirtingerfigure}:

\begin{figure}[h]
\centering
 \includegraphics[width=6cm,]{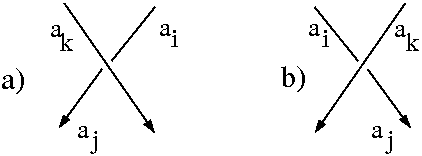}
\caption{Arcs around two types of crossings} \label{wirtingerfigure}
\end{figure}

One can easily verify that in the first case the loop $a_j$ is homotopic to $a_k a_i a_k^{-1}$ and in the second case the
loop $a_j$ is homotopic to $a_k^{-1} a_i a_k$.

The group  $\pi_1(S^3 \setminus N(L))$ admits the following presentation:
Let $D$ be an oriented diagram of a 
a link $L$
and  $A_1, \ldots, A_n$ be the arcs determined by $D$. The group $\pi_1(S^3 \setminus N(L))$, admits the following group presentation:

\noindent \textbf{Generators:} $\{a_{1}, \ldots a_{n} \}$, where $a_j$ is the loop associated to the arc $A_j$.

\noindent \textbf{Relations:}
To each crossing corresponds a relation as follows

\begin{eqnarray*}
&a_j a_k= a_k a_i &\text{ if } a_i, a_j, a_k \, \text{meet in a crossing like in case a) of  figure \ref{wirtingerfigure}};\\
&a_k a_j= a_i a_k &\text{ if } a_i, a_j, a_k \, \text{meet in a crossing like in case b) of  figure \ref{wirtingerfigure}}.
\end{eqnarray*}

We  recall that this presentation  is usually called upper Wirtinger presentation while the lower  Wirtinger presentation
is  obtained  applying Wirtinger method to the diagram where  all crossings are reversed;
  these presentations are generally different but the corresponding groups are evidently  isomorphic because of their geometrical meaning.

Another way to obtain a group presentation for $
\pi_1(S^3 \setminus N(L))
$ is to consider a braid
$\beta \in B_n$ such that its Alexander closure is isotopic to $L$; therefore the group
$
\pi_1(S^3 \setminus N(L))
$
admits the presentation:
$$ \pi_1(S^3 \setminus N(L))= \langle x_1, x_2, \ldots , x_n~ ||~ x_i = \beta (x_i),~~i=1, \ldots, n \rangle \, ,$$
where we consider  $\beta$ as an automorphism of $F_n$    (this a consequence of van Kampen's Theorem, see for instance
 \cite{W}).

Given  a  virtual link $vL$,
 according to  \cite{Ka}  the group of the virtual knot $vL$, denoted with $G_{K,v}(vL)$, is the group obtained extending the Wirtinger method to virtual diagrams, forgetting all virtual crossings.   This notion of group of a virtual knot (or link)  does not seem satisfactory: for instance
if $vT$ is the virtual trefoil knot with two classical crossings and one virtual crossing and
$U$ is unknot then $G_{K,v}(vT) \simeq G_{K,v}(U) \simeq \mathbb{Z}$, although that $vT$
is not equivalent to $U$. In addition, as noted Goussarov-Polyak-Viro \cite{GPV},   the upper Wirtinger group  of a virtual knot
is not necessary isomorphic
to the corresponding  lower Wirtinger group. 

We introduce another notion of group $G_v(vL)$ of a virtual link $vL$. Let $vL = \widehat{\beta_v}$ be a
closed virtual braid, where $\beta_v \in VB_n$ \footnote{The indices $v$ and $w$ are given to precise when we are considering virtual or welded braids}. Define
$$
G_v(vL) = \langle x_1, x_2, \ldots , x_n, y~ ||~ x_i = \psi(\beta_v)(x_i),~~i=1, \ldots, n \rangle.
$$
We will consider the action from left to right and for simplicity of notation we will write $x_i \beta_v$ instead of $\psi(\beta_v)(x_i).$

\medskip

\noindent {\bf Notation.} In the following we use the notations $[a, b]= a^{-1} b^{-1} a b$
and $a^b=b^{-1} a b$.

The following Theorem was announced in \cite{Bar-1}.

\begin{thm}  \label{theorem4}
 The group $G_v(vL)$ is an invariant of the virtual link $vL$.
 \end{thm}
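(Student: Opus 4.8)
The plan is to invoke the generalized Markov theorem for virtual braids \cite{K}, which asserts that two virtual braids $\beta_v \in VB_n$ and $\beta_v' \in VB_m$ have isotopic closures as virtual links if and only if they are related by a finite sequence of \emph{virtual Markov moves}: conjugation $\beta_v \mapsto \gamma^{-1}\beta_v\gamma$ with $\gamma \in VB_n$, together with the right real stabilizations $\beta_v \mapsto \beta_v\,\sigma_n^{\pm 1}$, the right virtual stabilization $\beta_v \mapsto \beta_v\,\rho_n$, and their left counterparts. Since isomorphism of groups is transitive, it suffices to show that the presentation defining $G_v$ yields isomorphic groups under each single move. I would treat conjugation and the right stabilizations in detail, the left ones being entirely analogous after the evident relabelling of strands.

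For conjugation, write $\phi=\psi(\beta_v)$ and $\eta=\psi(\gamma)$, so that $\psi(\gamma^{-1}\beta_v\gamma)=\eta^{-1}\phi\,\eta$ because $\psi$ is a homomorphism (\ref{theorem3}). First I record the elementary fact that the normal closure in $F_{n+1}$ of $\{x_i^{-1}\phi(x_i):1\le i\le n\}$ coincides with the normal closure $N_\phi$ of $\{w^{-1}\phi(w):w\in F_{n+1}\}$: indeed $\phi$ fixes $y$, hence $\phi$ fixes every generator modulo the former normal closure and so descends to the identity on the quotient, forcing $w^{-1}\phi(w)$ into it for each $w$. Thus $G_v(\widehat{\beta_v})=F_{n+1}/N_\phi$. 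A direct computation gives $\eta^{-1}(w^{-1}\phi(w))=u^{-1}(\eta^{-1}\phi\,\eta)(u)$ with $u=\eta^{-1}(w)$, so the automorphism $\eta^{-1}$ of $F_{n+1}$ carries $N_\phi$ onto $N_{\eta^{-1}\phi\eta}$ and therefore induces an isomorphism $G_v(\widehat{\beta_v})\cong G_v(\widehat{\gamma^{-1}\beta_v\gamma})$.

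For the classical right stabilization $\beta_v\mapsto\beta_v\sigma_n$ one passes from $F_{n+1}=\langle x_1,\dots,x_n,y\rangle$ to $F_{n+2}=\langle x_1,\dots,x_{n+1},y\rangle$. Viewing $\beta_v$ inside $VB_{n+1}$ it fixes $x_{n+1}$, so the relation from the index $i=n+1$ reads $x_{n+1}=x_{n+1}\,\beta_v\sigma_n=x_{n+1}\sigma_n=x_n$, that is $x_{n+1}=x_n$. Eliminating $x_{n+1}$ by this Tietze relation turns each occurrence of $x_n\sigma_n=x_nx_{n+1}x_n^{-1}$ back into $x_n$, so every remaining relation $x_i=x_i\beta_v\sigma_n$ collapses to $x_i=x_i\beta_v$ for $1\le i\le n$, recovering exactly the presentation of $G_v(\widehat{\beta_v})$. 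The case of $\sigma_n^{-1}$ is identical, the relation for $i=n+1$ again reducing to $x_{n+1}=x_n$. For the virtual right stabilization $\beta_v\mapsto\beta_v\rho_n$ the same mechanism applies: now $x_{n+1}\,\beta_v\rho_n=x_{n+1}\rho_n=y^{-1}x_ny$, so the extra relation is $x_{n+1}=y^{-1}x_ny$, and eliminating $x_{n+1}$ sends $x_n\rho_n=yx_{n+1}y^{-1}$ back to $x_n$, once more restoring the relations $x_i=x_i\beta_v$.

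The crux of the argument, and the only place where the explicit form of $\psi$ is used, is this last elimination: $\psi(\sigma_n)$ and $\psi(\rho_n)$ each express the new meridian $x_{n+1}$ as a definite conjugate of $x_n$ (by $x_n$, respectively by $y$) while acting on $x_n$ by the inverse conjugation, so that the stabilization relation precisely undoes their effect on the surviving generators. I expect no genuine obstacle here, only the care needed to verify that the substitution cancels the conjugations everywhere. Assembling the three cases through the Markov theorem gives the asserted invariance; the left stabilizations are handled by the same elimination applied to the first strand, and for the virtual exchange moves (should the chosen formulation of the theorem require them) one combines a Tietze elimination with the conjugation invariance already established.
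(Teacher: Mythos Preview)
Your treatment of conjugation and of the three right stabilizations is correct, and in fact your coinvariants argument for conjugation (passing to $N_\phi$ and transporting it by $\eta^{-1}$) is cleaner than the paper's generator-by-generator verification. The difficulty lies elsewhere.

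The formulation of the virtual Markov theorem you state is not the one in \cite{K}. Kamada's result does \emph{not} use left stabilizations; the complete list of moves is: braid moves, conjugation, \emph{right} stabilizations (positive, negative, virtual), and the \emph{right/left virtual exchange moves}
\[
b_1\,\sigma_n^{-1}\,b_2\,\sigma_n \;\longleftrightarrow\; b_1\,\rho_n\,b_2\,\rho_n,\qquad
s(b_1)\,\sigma_1^{-1}\,s(b_2)\,\sigma_1 \;\longleftrightarrow\; s(b_1)\,\rho_1\,s(b_2)\,\rho_1,
\]
with $b_1,b_2\in VB_n$. The exchange move is the whole point of Kamada's paper and cannot be traded for left stabilizations; there is no version of the theorem consisting only of conjugation plus two-sided stabilizations to appeal to.

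Consequently the exchange move is not optional, and it is precisely the step your proposal does not handle. Your closing remark that it follows from ``a Tietze elimination with the conjugation invariance already established'' is not a proof: conjugation can only cycle the factors of $b_1\sigma_n^{-1}b_2\sigma_n$, it cannot convert the pair $(\sigma_n^{-1},\sigma_n)$ into $(\rho_n,\rho_n)$. What is actually needed is to rewrite the relations of both groups in the form $x_i(\sigma_n^{-1}b_2)=x_i(b_1\sigma_n^{-1})$ and $x_i(\rho_n b_2)=x_i(b_1\rho_n)$, use that $b_1,b_2$ fix $x_{n+1}$, and then observe that the change of variable $z_{n+1}=y\,x_{n+1}\,y^{-1}$ turns the $\rho_n$-presentation into the $\sigma_n^{-1}$-presentation. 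This substitution is exactly what aligns the two actions $x_n\sigma_n^{-1}=x_{n+1}$ and $x_n\rho_n=y\,x_{n+1}\,y^{-1}$, and it does not fall out of the stabilization argument; it is a separate (and the longest) computation. Until you carry it out, the proof is incomplete.
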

  \begin{proof}
According to   \cite{K} two virtual braids have equivalent closures as virtual links if and only
if they are related by a finite sequence of the following moves:

1) a braid move (which is a move corresponding to a defining relation of the virtual braid group),

2) a conjugation in the virtual braid group,

3) a right stabilization of positive, negative or virtual type, and its inverse operation,

4) a right/left virtual exchange move.

Here \emph{a right stabilization of positive, negative or virtual type} is a replacement of $b \in VB_n$ by
$b \sigma_n,$ $b \sigma_n^{-1}$ or $b \rho_n$ $\in VB_{n+1},$ respectively, \emph{a right virtual exchange move} is a replacement
$$
b_1 \sigma_n^{-1} b_2 \sigma_n \longleftrightarrow b_1 \rho_n b_2 \rho_n \in VB_{n+1}
$$
and
\emph{a left virtual exchange move} is a replacement
$$
s(b_1) \sigma_1^{-1} s(b_2) \sigma_1 \longleftrightarrow s(b_1) \rho_1 s(b_2) \rho_1 \in VB_{n+1},
$$
where $b_1, b_2 \in VB_n$ and $s : VB_n \longrightarrow VB_{n+1}$ is the shift map i.e. $s(\sigma_i) = \sigma_{i+1}.$

We have to check that under all  moves 1) - 4), the group $G_v(vL)$ does not change. Let $vL = \widehat{\beta_v}$ where $\beta_v \in VB_n.$

1) If $\beta_v' \in VB_n$ is another braid such that $\beta_v = \beta_v'$ in $VB_n$, then $\psi(\beta_v) = \psi(\beta_v')$ since $\psi$ is a homomorphism.
Hence $G(\widehat{\beta_v}) = G(\widehat{\beta_v'})$ and the first move does not change the group of virtual link.

2) Evidently it is enough to consider only conjugations by the generators of $VB_n$. Let
$$
G_1 = G(\widehat{\beta_v}) = \langle x_1, x_2, \ldots, x_n, y ~||~x_i = x_i \beta_v,~i = 1, 2, \ldots, n \rangle
$$
and
$$
G_2 = G(\widehat{\sigma_k \beta_v \sigma_k^{-1}}) = \langle x_1, x_2, \ldots, x_n, y ~||~x_i = x_i (\sigma_k \beta_v \sigma_k^{-1}),~i = 1, 2,
\ldots, n \rangle,
$$
where $k \in \{ 1, 2, \ldots, n-1 \}.$
To prove that $G_2 \simeq G_1$ we rewrite the defining relations of $G_2$ in the form
$$
x_i \sigma_k = x_i (\sigma_k  \beta_v),~i = 1, 2, \ldots, n.
$$
If $i \not= k, k-1$ then this relation is equivalent to
$$
 x_i = x_i \beta_v
$$
since $x_i \sigma_k = x_i.$
But it is a relation in $G_1.$ Hence, we have to consider only two relations:
$$
x_k \sigma_k = x_k (\sigma_k \beta_v),~~~x_{k+1} \sigma_k = x_{k+1} (\sigma_k \beta_v).
$$
By the definition of $\psi$ these relations are equivalent to
$$
x_k x_{k+1} x_k^{-1} = (x_k x_{k+1} x_k^{-1}) \beta_v,~~~x_{k} = x_{k} \beta_v.
$$
The second relation is a relation from $G_1$. Rewrite the first relation:
$$
x_k x_{k+1} x_k^{-1} = (x_k \beta_v)  (x_{k+1} \beta_v) (x_k^{-1} \beta_v)
$$
and using the second relation we get
$$
 x_{k+1} = x_{k+1} \beta_v,
$$
which is a relation from $G_1$. Hence we proved that any relation from $G_1$ is true in $G_2$ and analogously one can prove
that any relation from $G_2$ is true in $G_1.$

Consider the conjugation by element $\rho_k.$ In this case we have
$$
G_2 = G(\widehat{\rho_k \beta_v \rho_k}) = \langle x_1, x_2, \ldots, x_n, y ~||~x_i = x_i (\rho_k \beta_v \rho_k),~i = 1, 2,
\ldots, n \rangle.
$$
Rewrite the relations of $G_2$ in the form
$$
x_i \rho_k = x_i (\rho_k \beta_v ),~i = 1, 2, \ldots, n.
$$
If $i \not= k, k-1$ then we have
$$
 x_i = x_i \beta_v,
$$
since $x_i \rho_k = x_i$ But it is a relation in $G_1.$ Hence, we have to consider only two relations:
$$
x_k \rho_k = x_k (\rho_k \beta_v),~~~x_{k+1} \rho_k = x_{k+1} (\rho_k \beta_v).
$$
By the definition of $\psi$ these relations are equivalent to
$$
y x_{k+1} y^{-1} = (y x_{k+1} y^{-1}) \beta_v,~~~y^{-1} x_{k} y = ( y^{-1} x_{k} y) \beta_v
$$
or
$$
y x_{k+1} y^{-1} = y (x_{k+1} \beta_v) y^{-1},~~~y^{-1} x_{k} y = y^{-1} (x_{k} \beta_v) y.
$$
These relations are equivalent to
$$
x_{k+1} = x_{k+1} \beta_v,~~~x_{k} = x_{k} \beta_v
$$
which are relations from $G_1.$ Hence we proved that the set of relations from $G_2$ is equivalent to the set of relations from $G_1$.

3) Consider the move from a braid $\beta_v = \beta_v(\sigma_1, \sigma_2, \ldots, \sigma_{n-1}, \rho_1, \rho_2, \ldots, \rho_{n-1}) \in VB_n$
to the braid $\beta_v \sigma_n^{-1} \in VB_{n+1}.$
We have two groups:
$$
G_1 = G(\widehat{\beta_v}) = \langle x_1, x_2, \ldots, x_n, y ~||~x_i = x_i \beta_v,~i = 1, 2, \ldots, n \rangle
$$
and
$$
G_2 = G(\widehat{\beta_v \sigma_n^{-1}}) = \langle x_1, x_2, \ldots, x_n, x_{n+1}, y ~||~x_i = x_i (\beta_v \sigma_n^{-1}),~i = 1, 2,
\ldots, n+1 \rangle
$$
and we need to  prove that they are isomorphic.

Rewrite the relations of $G_2$ in the form
$$
x_i \sigma_n = x_i \beta_v,~i = 1, 2, \ldots, n+1.
$$
If $i = 1, 2, \ldots, n-1$ then we have
$$
 x_i = x_i \beta_v,
$$
which is a relation in $G_1.$ Hence, we have to consider only two relations:
$$
x_n \sigma_n = x_n \beta_v,~~~x_{n+1} \sigma_n = x_{n+1} \beta_v,
$$
these relations are equivalent to
$$
x_n x_{n+1} x_n^{-1} = x_{n} \beta_v,~~~x_{n} = x_{n+1}.
$$
Using the second relation rewrite the first in the form
$$
x_{n} =  x_{n} \beta_v,
$$
which  is a relation from $G_1.$ Also we can remove $x_{n+1}$ from the set of generators of $G_2$. Hence we proved that the set of relations from $G_2$ is equivalent to the set of relations from $G_1$.

The move from a braid $\beta_v \in VB_n$ to the braid $\beta_v \sigma_n$ is similar.

Consider the move from a braid $\beta_v  \in VB_n$ to the braid $\beta_v \rho_n \in VB_{n+1}.$
We have two groups:
$$
G_1 = G(\widehat{\beta_v}) = \langle x_1, x_2, \ldots, x_n, y ~||~x_i = x_i \beta_v,~i = 1, 2, \ldots, n \rangle
$$
and
$$
G_2 = G(\widehat{\beta_v \rho_n}) = \langle x_1, x_2, \ldots, x_n, x_{n+1}, y ~||~x_i = x_i (\beta_v \rho_n),~i = 1, 2,
\ldots, n+1 \rangle .
$$

For $i = n$ and $i = n+1$ we have the following relations in $G_2$:
$$
x_n \rho_n = x_n \beta_v,~~~x_{n+1} \rho_n = x_{n+1} \beta_v,
$$
which are equivalent to
$$
y x_{n+1} y^{-1} = x_n \beta_v,~~~y^{-1} x_{n} y = x_{n+1}.
$$
Rewrite the second relation in the form $x_{n} = y x_{n+1} y^{-1}$ and substituting in the first relation we have
$$
x_n  = x_{n} \beta_v,
$$
which is a relation from $G_1.$ Also we can remove $x_{n+1}$ from the set of generators of $G_2$. Hence we proved that the set of relations from $G_2$ is equivalent to the set of relations from $G_1$.

4) Finally, consider the exchange  move \footnote{In this formula we take $b_2^{-1}$ instead $b_2$ for convenience.}
$$
b_1 \sigma_n^{-1} b_2^{-1} \sigma_n \longleftrightarrow b_1 \rho_n b_2^{-1} \rho_n,~~~b_1, b_2 \in VB_n.
$$

We have two groups:
$$
G_1 = G(\widehat{b_1 \sigma_n^{-1} b_2^{-1} \sigma_n}) = \langle x_1, x_2, \ldots, x_{n+1}, y ~||~x_i = x_i (b_1 \sigma_n^{-1} b_2^{-1} \sigma_n),
~i = 1, 2, \ldots, n+1 \rangle
$$
and
$$
G_2 = G(\widehat{b_1 \rho_n b_2^{-1} \rho_n}) = \langle x_1, x_2, \ldots, x_{n+1}, y ~||~x_i = x_i (b_1 \rho_n b_2^{-1} \rho_n),~i = 1, 2,
\ldots, n+1 \rangle
$$
and we need to prove that they are isomorphic.

Rewrite the defining relations from $G_1$ in the form
$$
x_i (\sigma_n^{-1} b_2) = x_i (b_1 \sigma_n^{-1}),~i = 1, 2, \ldots, n+1,
$$
and defining relations from $G_2$ in the form
$$
x_i (\rho_n b_2) = x_i (b_1 \rho_n),~i = 1, 2, \ldots, n+1.
$$
Let $b_1$ and $b_2$ are the following automorphisms
$$
b_1 :
\left\{
  \begin{array}{l}
x_1 \longmapsto x_{\pi(1)}^{a_1}, \\
\\
x_2 \longmapsto x_{\pi(2)}^{a_2}, \\
\\
................... \\
\\
x_n \longmapsto x_{\pi(n)}^{a_n}, \\
\\
x_{n+1} \longmapsto x_{n+1}, \\
\\
y \longmapsto y, \\
  \end{array}
\right.~~~~
b_2 :
\left\{
  \begin{array}{l}
x_1 \longmapsto x_{\tau(1)}^{c_1}, \\
\\
x_2 \longmapsto x_{\tau(2)}^{c_2}, \\
\\
................... \\
\\
x_n \longmapsto x_{\tau(n)}^{c_n}, \\
\\
x_{n+1} \longmapsto x_{n+1}, \\
\\
y \longmapsto y, \\
  \end{array}
\right.,
$$
where $\pi, \tau \in S_n$ and $a_i, c_i \in \langle x_1, x_2, \ldots, x_n, y \rangle$. Then the automorphism $\sigma_n^{-1} b_2$ has the form
$$
\sigma_n^{-1} b_2 :
\left\{
  \begin{array}{l}
x_1 \longmapsto x_{\tau(1)}^{c_1}, \\
\\
x_2 \longmapsto x_{\tau(2)}^{c_2}, \\
\\
................... \\
\\
x_{n-1} \longmapsto x_{\tau(n-1)}^{c_{n-1}}, \\
\\
x_{n} \longmapsto x_{n+1}, \\
\\
x_{n+1} \longmapsto x_{n+1}^{-1} x_{\tau(n)}^{c_n} x_{n+1}, \\
\\
y \longmapsto y, \\
  \end{array}
\right.
$$
and  the automorphism $b_1 \sigma_n^{-1}$ has the form
$$
b_1 \sigma_n^{-1} :
\left\{
  \begin{array}{l}
x_1 \longmapsto (x_{\pi(1)} \sigma_n^{-1})^{a_1 \sigma_n^{-1}}, \\
\\
x_2 \longmapsto (x_{\pi(2)} \sigma_n^{-1})^{a_2 \sigma_n^{-1}}, \\
\\
................................ \\
\\
x_{n-1} \longmapsto (x_{\pi(n-1)} \sigma_n^{-1})^{a_{n-1} \sigma_n^{-1}}, \\
\\
x_n \longmapsto (x_{\pi(n)} \sigma_n^{-1})^{a_n \sigma_n^{-1}}, \\
\\
x_{n+1} \longmapsto x_{n+1}^{-1} x_{n} x_{n+1}, \\
\\
y \longmapsto y. \\
  \end{array}
\right.
$$
Hence the first group has the following presentation
$$
G_1 = \langle x_1, x_2, \ldots, x_{n+1}, y ~||~x_{\tau(1)}^{c_1} = (x_{\pi(1)} \sigma_n^{-1})^{a_1 \sigma_n^{-1}},~~
x_{\tau(2)}^{c_2} = (x_{\pi(2)} \sigma_n^{-1})^{a_2 \sigma_n^{-1}}, \ldots,
$$
$$
x_{\tau(n-1)}^{c_{n-1}} = (x_{\pi(n-1)} \sigma_n^{-1})^{a_{n-1} \sigma_n^{-1}},~~
x_{n+1} = (x_{\pi(n)} \sigma_n^{-1})^{a_n \sigma_n^{-1}},~~
x_{\tau(n)}^{c_n} = x_n \rangle.
$$

Analogously, construct the presentation for $G_2.$ Calculate the automorphism
$$
\rho_n b_2 :
\left\{
  \begin{array}{l}
x_1 \longmapsto x_{\tau(1)}^{c_1}, \\
\\
x_2 \longmapsto x_{\tau(2)}^{c_2}, \\
\\
.................... \\
\\
x_{n-1} \longmapsto x_{\tau(n-1)}^{c_{n-1}}, \\
\\
x_{n} \longmapsto y x_{n+1} y^{-1}, \\
\\
x_{n+1} \longmapsto y^{-1} x_{\tau(n)}^{c_n} y, \\
\\
y \longmapsto y, \\
  \end{array}
\right.
$$
and  the automorphism $b_1 \rho_n$ has the form
$$
b_1 \rho_n :
\left\{
  \begin{array}{l}
x_1 \longmapsto (x_{\pi(1)} \rho_n)^{a_1 \rho_n}, \\
\\
x_2 \longmapsto (x_{\pi(2)} \rho_n)^{a_2 \rho_n}, \\
\\
................................ \\
\\
x_{n-1} \longmapsto (x_{\pi(n-1)} \rho_n)^{a_{n-1} \rho_n}, \\
\\
x_n \longmapsto (x_{\pi(n)} \rho_n)^{a_n \rho_n}, \\
\\
x_{n+1} \longmapsto y^{-1} x_{n} y, \\
\\
y \longmapsto y. \\
  \end{array}
\right.
$$
Hence the second group has the following presentation
$$
G_2 = \langle x_1, x_2, \ldots, x_{n+1}, y ~||~x_{\tau(1)}^{c_1} = (x_{\pi(1)} \rho_n)^{a_1 \rho_n},~~
x_{\tau(2)}^{c_2} = (x_{\pi(2)} \rho_n)^{a_2 \rho_n}, \ldots,
$$
$$
x_{\tau(n-1)}^{c_{n-1}} = (x_{\pi(n-1)} \rho_n)^{a_{n-1} \rho_n},~~
y x_{n+1} y^{-1} = (x_{\pi(n)} \rho_n)^{a_n \rho_n},~~
x_{\tau(n)}^{c_n} = x_n \rangle.
$$
Compare $G_1$ and $G_2$:
since $a_i = a_i(x_1, x_2, \ldots, x_n, y),$ let us  denote
$$
a'_i = a_i \sigma_n^{-1} = a_i(x_1, x_2, \ldots, x_{n-1}, x_{n+1}, y)
$$
and
$$
a''_i = a_i \rho_n = a_i(x_1, x_2, \ldots, x_{n-1}, y x_{n+1} y^{-1}, y).
$$
Then
$$
G_1 = \langle x_1, x_2, \ldots, x_{n+1}, y ~||~x_{\tau(1)}^{c_1} = (x_{\pi(1)} \sigma_n^{-1})^{a'_1},~~
x_{\tau(2)}^{c_2} = (x_{\pi(2)} \sigma_n^{-1})^{a'_2}, \ldots,
$$
$$
x_{\tau(n-1)}^{c_{n-1}} = (x_{\pi(n-1)} \sigma_n^{-1})^{a'_{n-1}},~~
x_{n+1} = (x_{\pi(n)} \sigma_n^{-1})^{a'_n},~~
x_{\tau(n)}^{c_n} = x_n \rangle.
$$
and
$$
G_2 = \langle x_1, x_2, \ldots, x_{n+1}, y ~||~x_{\tau(1)}^{c_1} = (x_{\pi(1)} \rho_n)^{a''_1},~~
x_{\tau(2)}^{c_2} = (x_{\pi(2)} \rho_n)^{a''_2}, \ldots,
$$
$$
x_{\tau(n-1)}^{c_{n-1}} = (x_{\pi(n-1)} \rho_n)^{a''_{n-1}},~~
y x_{n+1} y^{-1} = (x_{\pi(n)} \rho_n)^{a''_n},~~
x_{\tau(n)}^{c_n} = x_n \rangle.
$$
Denote by $z_{n+1} = y x_{n+1} y^{-1}$;  the group  $G_2$ has therefore the  following presentation
$$
G_2 = \langle x_1, x_2, \ldots, x_{n}, z_{n+1}, y ~||~x_{\tau(1)}^{c_1} = (x_{\pi(1)} \rho_n)^{a'_1},~~
x_{\tau(2)}^{c_2} = (x_{\pi(2)} \rho_n)^{a'_2}, \ldots,
$$
$$
x_{\tau(n-1)}^{c_{n-1}} = (x_{\pi(n-1)} \rho_n)^{a'_{n-1}},~~
z_{n+1} = (x_{\pi(n)} \rho_n)^{a'_n},~~x_{\tau(n)}^{c_n} = x_n \rangle,
$$
where $a'_i = a_i \sigma_n^{-1} = a_i(x_1, x_2, \ldots, x_{n-1}, z_{n+1}, y).$
We will assume that $n = \pi(1)$ (other cases consider analogously). Then our groups have presentations:
$$
G_1 = \langle x_1, x_2, \ldots, x_{n+1}, y ~||~x_{\tau(1)}^{c_1} = x_{n+1}^{a'_1},~~
x_{\tau(2)}^{c_2} = (x_{\pi(2)})^{a'_2}, \ldots,
$$
$$
x_{\tau(n-1)}^{c_{n-1}} = (x_{\pi(n-1)})^{a'_{n-1}},~~
x_{n+1} = (x_{\pi(n)})^{a'_n},~~
x_{\tau(n)}^{c_n} = x_n \rangle,
$$
$$
G_2 = \langle x_1, x_2, \ldots, x_{n}, z_{n+1}, y ~||~x_{\tau(1)}^{c_1} = (z_{n+1})^{a'_1},~~
x_{\tau(2)}^{c_2} = (x_{\pi(2)})^{a'_2}, \ldots,
$$
$$
x_{\tau(n-1)}^{c_{n-1}} = (x_{\pi(n-1)})^{a'_{n-1}},~~
z_{n+1} = (x_{\pi(n)})^{a'_n},~~x_{\tau(n)}^{c_n} = x_n \rangle.
$$
and therefore they are isomorphic.
\end{proof}

{\bf Example 0.}  For the unknot $U$ we have
$$
G_v(U) = \langle x, y \rangle \simeq F_2.
$$

\medskip

{\bf Example 1.} For  the virtual trefoil $vT$ we have that $vT = \widehat{ \sigma_1^2 \rho_1}$  and then
$$
G_v(vT) = \langle x, y ~||~ x \, (y \, x \, y^{-2} \, x \, y) = (y \, x \, y^{-2} \, x \, y) \, x
\rangle \not\simeq F_2.
$$
Hence, we obtain a new proof of the fact that $vT$  a non-trivial virtual knot.

\medskip

{\bf Example 2.}  The group $G_v(K)$ is not a complete invariant for virtual knots.  Let $c = \rho_1 \sigma_1 \sigma_2 \sigma_1 \rho_1 \sigma_1^{-1} \sigma_2^{-1} \sigma_1^{-1} \in VB_3$; the closure $\widehat{c}$ is
equivalent to the Kishino knot (see Figure \ref{kishino}). The Kishino knot 
 is a non trivial virtual knot \cite{ESSSW} with trivial Jones polynomial and trivial fundamental group ($G_{K,v}( \widehat{c} ) =\Z$).
   For this knot we have that $G_v(\widehat{c}) = F_2$.

\begin{figure}[h]
\centering
 \includegraphics[width=10cm,]{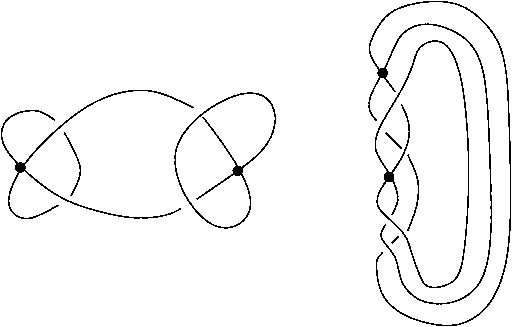}
\caption{The Kishino knot: usual diagram and as  the closure of a virtual braid.} \label{kishino}
\end{figure}

In fact, it is not difficult to prove that $c$ defines the following automorphism of $F_4$
$$
\psi(c) :
\left\{
  \begin{array}{l}
x_1 \longmapsto y^2 x_3^{-1} x_2 x_3 y^{-2} x_3 y^2 x_3^{-1} x_2^{-1} x_3 y^{-2}, \\
\\
x_2 \longmapsto x_3^{-1} x_2 x_3 y^{-2} x_3 y x_3^{-1} x_2^{-1} x_1 x_2 x_3 y^{-1} x_3^{-1} y^2 x_3^{-1} x_2^{-1} x_3, \\
\\
x_{3} \longmapsto y x_3^{-1} x_2 x_3 y^{-1}, \\
\\
y \longmapsto y, \\
  \end{array}
\right.
$$
and therefore the  image of $c$ as automorphism of $F_4$ is non trivial: nevertheless we have that

$$
G_v(\widehat{c}) = \langle x_1, x_2, x_3, y~||~x_1 = y^2 x_3^{-1} x_2 x_3 y^{-2} x_3 y^2 x_3^{-1} x_2^{-1} x_3 y^{-2},
$$
$$
x_2 = x_3^{-1} x_2 x_3 y^{-2} x_3 y x_3^{-1} x_2^{-1} x_1 x_2 x_3 y^{-1} x_3^{-1} y^2 x_3^{-1} x_2^{-1} x_3, \;
x_3 = y x_3^{-1} x_2 x_3 y^{-1}
\rangle.
$$
Using the first relation we can remove $x_1$ and using the third relation we can remove $x_2$. We get
$$
G_v(\widehat{c}) = \langle x, y~||~x y^{-1} x y x^{-1} =  x y^{-1} x y x^{-1} \rangle \simeq F_2,
$$
where $x = x_3$

\medskip

{\bf Example 3.} Let $b_1 = b_2^{-1} = \sigma_1 \rho_1 \sigma_1$ and $b = b_1 \rho_2 b_2 \rho_2.$ It is easy to check that for this virtual braid its group
$$
G_v( \widehat{b} ) = \langle x_1, x_2, y ~||~y x_1 y^{-1} = x_2\rangle
$$
is an HNN-extension of the free group $\langle x_1, x_2 \rangle$ with cyclic associated subgroups.
Therefore the closure of $b$ is a non trivial virtual link.

\medskip

The following Propositions establish the relations between the different notions of groups of (virtual) knots.

\begin{prop}\label{propos}
Let $K$ be a classical knot then
$$
G_v(K)=\mathbb{Z}\ast \pi_1(S^3 \setminus N(K)),~~~~\mathbb{Z}= \langle y \rangle.
$$
\end{prop}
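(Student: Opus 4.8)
The plan is to realize the classical knot $K$ as the closure of a classical braid and then show that the extra generator $y$ simply splits off as a free factor. By Alexander's theorem for classical links there is a braid $\beta\in B_n\subseteq VB_n$ with $\widehat{\beta}=K$; because $K$ is classical we may take $\beta$ to be a word in the $\sigma_i^{\pm1}$ alone, involving none of the $\rho_i$. The first observation I would record is that the representation $\psi$ restricted to the braid subgroup behaves very simply: each $\psi(\sigma_i)$ fixes $y$ and acts on $\langle x_1,\ldots,x_n\rangle$ exactly by the classical Artin automorphism recalled in Section~\ref{gl}. Consequently $\psi(\beta)$ also fixes $y$, and for every $i$ the element $x_i\beta=\psi(\beta)(x_i)$ is a word in $x_1,\ldots,x_n$ in which $y$ never occurs.

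With this in hand I would read off the defining relations of
$$
G_v(K)=\langle x_1,\ldots,x_n,y \mid x_i=x_i\beta,\ i=1,\ldots,n\rangle .
$$
By the previous step each relator $x_i^{-1}(x_i\beta)$ lies in the subgroup generated by $x_1,\ldots,x_n$, so the generator $y$ appears in no relation and none of the relations involves $y$. A presentation whose generating set splits into two disjoint families, with every relator contained in the subgroup generated by one of them, presents the free product of the two corresponding groups; applying this here gives
$$
G_v(K)=\langle x_1,\ldots,x_n \mid x_i=\beta(x_i),\ i=1,\ldots,n\rangle \ast \langle y\rangle .
$$

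Finally I would identify the two factors. The second factor $\langle y\rangle$ is free of rank one, hence $\mathbb{Z}$, as claimed. The first factor is presented exactly by the braid presentation of $\pi_1(S^3\setminus N(K))$ recorded in Section~\ref{gvl}, namely the presentation obtained from van Kampen's theorem applied to a braid $\beta$ whose closure is $K$; therefore the first factor is $\pi_1(S^3\setminus N(K))$, and the decomposition $G_v(K)=\mathbb{Z}\ast\pi_1(S^3\setminus N(K))$ follows.

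The only step that genuinely requires care is the initial observation that $\psi$ carries the braid subgroup $\langle\sigma_1,\ldots,\sigma_{n-1}\rangle$ to automorphisms fixing $y$ and reproducing the Artin action on the $x_j$; this is precisely what forces $y$ to be absent from all relators. Once it is established, the free-product splitting is a formal consequence of the shape of the presentation, and the identification of the nontrivial factor is immediate from the presentation already stated in the text, so I do not expect any further obstacle.
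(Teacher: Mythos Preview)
Your argument is correct and follows essentially the same route as the paper: the key point in both is that $\psi$ restricted to $B_n$ is just the Artin representation composed with the natural inclusion $\mathrm{Aut}(F_n)\hookrightarrow\mathrm{Aut}(F_{n+1})$, so $y$ appears in no relator and the presentation splits as the stated free product. Your write-up is somewhat more explicit about the free-product step, but the substance is identical.
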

\begin{proof}
As previously recalled, if $\beta \in B_n$ is a braid such that its Alexander closure is isotopic to $K$, the group
$
\pi_1(S^3 \setminus N(K))
$ admits
the presentation
$ \langle x_1, x_2, \ldots , x_n~ ||~ x_i = \beta (x_i),~~i=1, \ldots, n \rangle$.

The claim follows therefore from the remark that the representation $\psi$ of $VB_n$ in $\mbox{Aut}(F_{n+1})$ restraint to $B_n$ coincides with the usual Artin representation
composed with the natural inclusion $ \iota: \mbox{Aut}(F_{n}) \to \mbox{Aut}(F_{n+1})$.
\end{proof}

The Proposition below will be proved at the end of Section \ref{gwl}.

\begin{prop}\label{isomprincipal}
 Let $vK$ be a virtual knot.
 \begin{enumerate}
 \item  The group
$G_v(vK)/\langle \langle y \rangle \rangle$ is isomorphic to $G_{K,v}(vK)$;
\item The abelianization of $G_v(vK)$ is isomorphic to $\Z^2$.
\end{enumerate}
\end{prop}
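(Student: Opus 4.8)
The plan is to treat the two assertions separately, in both cases exploiting that $\psi(\beta_v)$ is a conjugating automorphism of $F_{n+1}$ fixing $y$ (indeed $\psi(VB_n)\subseteq WB_{n+1}$, as observed after Theorem~\ref{theorem3}), together with Proposition~\ref{virtualvswelded}. For part (1), write $vK=\widehat{\beta_v}$ with $\beta_v\in VB_n$ and start from
\[
G_v(vK)=\langle x_1,\dots,x_n,y \mid x_i=\psi(\beta_v)(x_i),\ i=1,\dots,n\rangle,
\]
then impose $y=1$. The quotient map $p_n\colon F_{n+1}\to F_{n+1}/\langle\langle y\rangle\rangle\cong F_n$ is exactly the operation of killing $y$, and by Proposition~\ref{virtualvswelded} it intertwines $\psi(\beta_v)$ with $q_n(\beta_v)$. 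Hence a Tietze elimination of $y$ turns each relator $x_i^{-1}\psi(\beta_v)(x_i)$ into $x_i^{-1}q_n(\beta_v)(x_i)$ read in $F_n$, giving
\[
G_v(vK)/\langle\langle y\rangle\rangle\cong\langle x_1,\dots,x_n\mid x_i=q_n(\beta_v)(x_i),\ i=1,\dots,n\rangle.
\]

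It then remains to identify this right-hand side with Kauffman's group $G_{K,v}(vK)$. I would do this by the exact analogue, in the welded setting, of the van Kampen/Wirtinger computation recalled just before Proposition~\ref{propos}. Reading the braid word from top to bottom, each classical generator $\sigma_i^{\pm1}$ contributes precisely the Artin/Wirtinger relation at the corresponding classical crossing, while \emph{forgetting} a virtual crossing $\rho_i$ introduces no relation and merely transports the label at position $i$ to position $i+1$, i.e. acts by the transposition $\alpha_i=q_n(\rho_i)$. Therefore the cumulative effect of the word on the top labels is exactly $q_n(\beta_v)$, and closing the braid identifies each top label $x_i$ with its image $q_n(\beta_v)(x_i)$; this is by construction the Wirtinger presentation of $G_{K,v}(vK)$ with virtual crossings forgotten. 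I would make the matching explicit crossing by crossing.

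For part (2), I abelianize the presentation of $G_v(vK)$. Since every generator of $VB_n$ carries each $x_i$ to a conjugate of some $x_{\pi(i)}$ and fixes $y$, the automorphism $\psi(\beta_v)$ has the form $x_i\mapsto w_i^{-1}x_{\pi(i)}w_i$, $y\mapsto y$, where $\pi\in S_n$ is the underlying permutation of $\beta_v$. In the abelianization $\mathbb{Z}^{n+1}=\langle\bar x_1,\dots,\bar x_n,\bar y\rangle$ the conjugators vanish, so each defining relation becomes $\bar x_i=\bar x_{\pi(i)}$, and $\bar y$ remains unconstrained. Because $vK$ is a knot its closure has a single component, forcing $\pi$ to be an $n$-cycle; the relations $\bar x_i=\bar x_{\pi(i)}$ then identify all the $\bar x_i$ (the cokernel of $I-P_\pi$ on the $x$-coordinates being $\mathbb{Z}$). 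Hence the abelianization is free abelian on the common class of the $\bar x_i$ and on $\bar y$, i.e. isomorphic to $\mathbb{Z}^2$.

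The routine ingredients here (the Tietze elimination, the triviality of conjugation after abelianizing, and the cycle structure of the braid permutation of a knot) are straightforward. The main obstacle is the identification in part (1) of $\langle x_1,\dots,x_n\mid x_i=q_n(\beta_v)(x_i)\rangle$ with $G_{K,v}(vK)$: one must argue carefully that Kauffman's rule of forgetting virtual crossings corresponds, at the level of the braid closure, precisely to replacing each $\rho_i$ by the strand transposition $\alpha_i$, and that no spurious relation is produced at a virtual crossing. Once this dictionary between the Wirtinger relations and the conjugating-automorphism action is in place, both statements follow.
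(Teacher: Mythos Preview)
Your proof is correct and follows essentially the same approach as the paper. The only organizational difference is in part~(1): after killing $y$ and arriving at $\langle x_1,\dots,x_n\mid x_i=q_n(\beta_v)(x_i)\rangle$, the paper identifies this group as $G_w(wK)$ for the welded knot $wK=\widehat{q_n(\beta_v)}$ and then invokes Proposition~\ref{isomweld} (that $G_w(wK)\cong G_{K,w}(wK)$) together with Proposition~\ref{weldedvskauffman} (that $G_{K,w}(\mathcal{D})=G_{K,v}(\mathcal{D})$ for the same diagram), whereas you carry out the Wirtinger/braid-closure comparison directly at the level of virtual diagrams. Your inline argument is precisely the content of those two propositions unpacked, so nothing is genuinely different. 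For part~(2) your argument is the same as the paper's, and in fact slightly more precise: what matters is that $\pi$ is an $n$-cycle (a single orbit, since $vK$ is a knot), which is exactly what you say.
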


We recall that a group $G$ is residually finite if for any nontrivial element $g \in G$
there exists a finite-index subgroup of $G$ which does not   contain $g$.

 According to \cite{He} every knot group
of a classical knot  is residually finite, while the  virtual knot
groups defined by Kauffman does not need to have this property (see \cite{SW}).

\begin{prop}
 Let $K$ be a classical knot then
$G_v(K)$ is residually finite.
\end{prop}
\begin{proof}
The free product of two residually finite groups is residually finite \cite{Gr}.
\end{proof}

We do not  know whether $G_v(vK)$ is residually finite for any virtual knot $vK$.

\begin{rem}
We can modify Wirtinger method to virtual links using previous representation of $VB_n$. 
Let us now consider each  virtual crossing as the common endpoint of  four
different arcs 
and  mark  the  obtained arcs with labels $x_1,\ldots, x_m$ and  add an element $y$ of this set. Now consider the group $G_y(vK)$ generated by elements $x_1, \ldots, x_m,y$ under  the usual Wirtinger relations for classical crossings  plus
the following relations for virtual crossing: the labeling of arcs $x_k, x_l, x_i, x_j $ meeting in a virtual crossing   as in Figure \ref{wirtweld}
respect the relations $x_i=y^{-1} x_l y$ and $x_j=y x_k y^{-1}$.
One can easily check that $G_y(vK)$  is actually invariant under 
virtual and mixed Reidemeister moves and hence  is an invariant for virtual knots.
Arguments in Proposition \ref{isomweld} can be therefore easily adapted to this case to prove that  $G_y(vK)$  is   isomorphic  $G_{v}(vK)$.
\end{rem}

\begin{rem}
Using the "Wirtinger like" labeling proposed in previous  Remark
it is also possible to extend the notion of group system to  $G_{v}(vK)$.
We recall that the group  system of a classical knot $K$ is given by the knot group, a meridian and its corresponding longitude: in the case of $G_{v}(vK)$
we can call meridian the generator corresponding to any arc. The longitude corresponding to this meridian is defined as follows: we go along the diagram starting from this arc (say $a_i$) and we write $a_k^{-1}$ when passing under $a_k$ as in  Figure \ref{wirtingerfigure} a) and $a_k$ when passing under $a_k$ as in  Figure \ref{wirtingerfigure} b).  On the other hand if  we encounter a virtual crossing according Figure \ref{wirtweld}  we write $y$ when we pass from $x_l$ to $x_i$ and we write $y^{-1}$ when we pass from $x_k$  to $x_j$. Finally we write  $a_i^{-m}$ where $m$ is the length (the sum of exponents) of the word that we wrote following the diagram. It is easy to verify that such an element belongs to the commutator subgroup of    $G_{v}(vK)$ and that  meridian and longitude are well defined under generalized Reidemeister moves.
\end{rem}

\section{Groups of welded links}\label{gwl}

As in the case of virtual links, Wirtinger method can be naturally adapted also to welded links: it suffices to check  that
also the forbidden relation $F1$ is preserved by Wirtinger labeling.
  Given  a  welded link $wL$ we can therefore define  the group of the welded link $wL$, $G_{K,w}(wL)$, as the group obtained extending the Wirtinger method to welded diagrams, forgetting all welded crossings.
  This fact has been  already remarked: see for instance  \cite{Wi}, where  the notion of {\em group system} is extended to welded knot diagrams.

Notice that the forbidden move $F2$ is not preserved by above method and then that the lower Wirtinger presentation does not extend to welded link diagrams.

On the other hand, considering welded links as closure of welded braids we can deduce as in the virtual case another possible definition of group of a welded link.  Let $wL = \widehat{\beta_w}$ be the closure
of the welded braid  $\beta_w \in WB_n$. Define
$$
G_w(wL) = \langle x_1, x_2, \ldots , x_n~ ||~ x_i = \beta_w(x_i),~~i=1, \ldots, n \rangle.
$$

\begin{thm}  \label{theorem5}
 The group $G_w(wL)$ is an invariant of the virtual link $wL$.
 \end{thm}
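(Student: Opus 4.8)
The plan is to follow the template of the proof of Theorem~\ref{theorem4}: invoke the welded analogue of the Markov theorem to reduce the claim to checking that $G_w(wL)$ is left unchanged by each of a finite list of moves on welded braids, and then verify each move by a Tietze manipulation of the defining relations. The decisive simplification over the virtual case is that, by the isomorphism $WB_n\simeq C_n$ of~\cite{FRR}, a welded braid $\beta_w$ already acts on $F_n=\langle x_1,\dots,x_n\rangle$ as a conjugating automorphism; hence the auxiliary generator $y$ never appears and every computation stays inside $F_n$.

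First I would quote from~\cite{K} the welded Markov theorem: two welded braids have equivalent closures as welded links precisely when they are related by a finite sequence of braid moves (defining relations of $WB_n$), conjugations in $WB_n$, and right stabilizations of positive, negative or welded type, i.e.\ replacing $\beta_w\in WB_n$ by $\beta_w\sigma_n$, $\beta_w\sigma_n^{-1}$ or $\beta_w\alpha_n\in WB_{n+1}$. The braid moves are immediate, since $G_w(wL)$ depends only on the automorphism $\beta_w$ and not on the word chosen to represent it. For conjugation it suffices to treat the generators: conjugating by $\sigma_k$ reproduces verbatim the computation in Theorem~\ref{theorem4} with $y$ suppressed, while conjugation by $\alpha_k$ is shorter still, since $\alpha_k$ merely transposes $x_k$ and $x_{k+1}$ (and $\alpha_k^{-1}=\alpha_k$), so the rewritten relations $x_k\alpha_k=x_k(\alpha_k\beta_w)$ and $x_{k+1}\alpha_k=x_{k+1}(\alpha_k\beta_w)$ collapse at once to $x_{k+1}=x_{k+1}\beta_w$ and $x_k=x_k\beta_w$.

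For the stabilizations I would work in $F_{n+1}$. In the $\sigma_n^{\pm1}$ cases the relation $x_{n+1}\sigma_n=x_n$ forces $x_n=x_{n+1}$, after which $x_nx_{n+1}x_n^{-1}=x_n\beta_w$ collapses to $x_n=x_n\beta_w$ and $x_{n+1}$ becomes redundant, exactly as in Theorem~\ref{theorem4}. For the welded stabilization $\beta_w\alpha_n$, the two relations for $i=n,n+1$ read $x_{n+1}=x_n\beta_w$ and $x_n=x_{n+1}$ (using $\alpha_n\colon x_n\mapsto x_{n+1}$, $x_{n+1}\mapsto x_n$ and the fact that $\beta_w$ fixes $x_{n+1}$); substituting the second into the first yields $x_n=x_n\beta_w$ and eliminates $x_{n+1}$. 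In each case the presentation of $G_2$ is Tietze-equivalent to that of $G_1$, so the two groups are isomorphic.

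The single point to watch is the exact list of welded Markov moves: in contrast to the virtual case, the welded theorem of~\cite{K} contains no exchange moves, these being rendered redundant by the $F1$ relation already present in $WB_n$. Consequently the most technical part of the proof of Theorem~\ref{theorem4}---the verification under the left and right virtual exchange moves---has no counterpart here, and, granting the cited Markov theorem, the argument presents no real obstacle.
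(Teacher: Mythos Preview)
Your proposal is correct and follows essentially the same approach as the paper: both invoke the welded Markov theorem from~\cite{K}, observe that in the welded case there are no exchange moves to check, and then note that the remaining verifications (braid moves, conjugation, the three types of stabilization) are carried out exactly as in Theorem~\ref{theorem4} with the generator $y$ suppressed. The paper simply says ``we can repeat verbatim the proof of Theorem~\ref{theorem4}'' for moves 2) and 3); you have spelled out the short computations (particularly the $\alpha_k$-conjugation and the welded stabilization), but the argument is the same.
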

 \begin{proof}
We recall that two welded  braids have equivalent closures as welded links if and only
if they are related by a finite sequence of the following moves \cite{K}:

1) a braid move (which is a move corresponding to a defining relation of the welded  braid group),

2) a conjugation in the welded braid group,

3) a right stabilization of positive, negative or welded type, and its inverse operation,

We have to check that under all  moves 1) - 3), the group $G_w(wL)$ does not change: the move 1) is evident and
for the moves 2) and 3) we can repeat verbatim the proof of Theorem~\ref{theorem4}.
 \end{proof}

\begin{prop}\label{isomweld}
Let $wK$ be a welded knot.  The groups  $G_w(wK)$ and $G_{K,w}(wK)$ are isomorphic.
\end{prop}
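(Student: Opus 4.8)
The plan is to compute $G_{K,w}(wK)$ on the specific diagram obtained by closing the welded braid $\beta_w$, and then to reduce the resulting Wirtinger presentation to the presentation defining $G_w(wK)$ by Tietze transformations. This is the exact welded analogue of the classical computation recalled just before Proposition~\ref{propos}; the only new feature is the behaviour of meridians at welded crossings, and I will isolate that as the one point needing care.

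Fix $\beta_w \in WB_n$ with $\widehat{\beta_w} = wK$, and recall from \cite{FRR} that under the identification $WB_n \simeq C_n \subseteq \mathrm{Aut}(F_n)$ the generators $\sigma_i,\alpha_i$ act by the conjugating automorphisms displayed in Section~\ref{gl}. First I would check that this action is precisely the welded Wirtinger labelling read downward along the braid. Label the $n$ strands at the top by meridians $x_1,\dots,x_n$ and follow them to the bottom. At a classical crossing the outgoing underarc receives the conjugated label prescribed by the two cases of Figure~\ref{wirtingerfigure}, which is exactly how $\sigma_i^{\pm 1}$ acts; at a welded crossing the two labels are merely carried through and transposed, which is exactly how $\alpha_i$ acts and is consistent with ``forgetting'' the welded crossing, since there the arcs run straight through without breaking. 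Hence the word labelling the bottom of the strand issuing from top position $i$ is, by definition of the automorphism, $\beta_w(x_i)$.

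Next I would write the Wirtinger presentation of $G_{K,w}(wK)$ for this diagram: generators are the meridians of the arcs, where an arc is broken only at a classical undercrossing and runs freely through over- and welded crossings, and relations are attached to the classical crossings. Reading downward, each classical-crossing relation expresses the label of a newly created underarc as a conjugate of labels already introduced, so it may be used to eliminate that generator. Starting from $x_1,\dots,x_n$ and eliminating in order, all intermediate generators disappear and only $x_1,\dots,x_n$ survive, the bottom labels becoming the words $\beta_w(x_i)$ of the previous paragraph. Finally the braid closure identifies, for each $i$, the top meridian at position $i$ with the bottom label of the strand ending there, which after the elimination is the relation $x_i = \beta_w(x_i)$. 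Thus the reduced presentation is exactly $\langle x_1,\dots,x_n \mid x_i = \beta_w(x_i)\rangle$, proving $G_{K,w}(wK) \simeq G_w(wK)$.

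The only delicate point, and the sole difference from the classical case, is the bookkeeping at welded crossings: a single arc may traverse several of them, and one must verify that transposing labels without adding a relation is compatible with the successive elimination and reproduces the words obtained by composing the automorphisms $\alpha_i$. Since $\alpha_i$ contributes neither conjugation nor Wirtinger relation but only the transposition of $x_i$ and $x_{i+1}$, this compatibility is immediate and the elimination scheme goes through verbatim, yielding the claimed isomorphism.
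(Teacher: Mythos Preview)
Your argument is correct and follows the same route as the paper: compute the Wirtinger presentation on the closed-braid diagram, check that the labelling rule at each crossing type matches the action of the corresponding generator of $WB_n$, eliminate the intermediate arc generators, and identify the reduced presentation with $G_w(wK)$. The only cosmetic discrepancy is that, with the paper's left-to-right convention, the bottom labels come out as $\beta_w^{-1}(x_i)$ rather than $\beta_w(x_i)$; since the relation sets $\{x_i=\beta_w(x_i)\}$ and $\{x_i=\beta_w^{-1}(x_i)\}$ are equivalent, this does not affect the conclusion.
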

\begin{proof}
Any generator of $WB_n$ acts trivially on generators of $F_n$ except a pair of  generators:
more precisely we have respectively that

\begin{eqnarray*} 
x_i \cdot \sigma_i= x_i x_{i+1} x_i^{-1} &:=u^+(x_i, x_{i+1})  \, ,\\
x_{i+1} \cdot \sigma_i= x_i &:=v^+(x_i, x_{i+1}) \, ,\\
x_j \cdot \sigma_i= x_j &\qquad j \not= i, i+1 \, ,
\end{eqnarray*}

\begin{eqnarray*} 
x_i \cdot \sigma_i^{-1}=  x_{i+1}  &:=u^-(x_i, x_{i+1}) \, ,\\
x_{i+1} \cdot \sigma_i^{-1}= x_{i+1}^{-1}  x_i x_{i+1} &:=v^-(x_i, x_{i+1}) \, ,\\
x_j \cdot \sigma_i^{-1}= x_j &\qquad j \not= i, i+1 \, ,
\end{eqnarray*}

\begin{eqnarray*} 
x_i \cdot \alpha_i=   x_{i+1}  &:=u^\circ(x_i, x_{i+1}) \, ,\\
x_{i+1} \cdot \alpha_i= x_i &:=v^\circ(x_i, x_{i+1}) \, ,\\
x_j \cdot \alpha_i= x_j &\qquad j \not= i, i+1 \, .
\end{eqnarray*}

Let $\beta_w$ a welded braid with closure equivalent to $wK$.
Let us regard to the diagram representing the closure of
$\beta_w$,  $\widehat{\beta_w}$, as a directed graph and denote the edges of $\widehat{\beta_w}$, the generators of $G_{K,w}(wK)$ by labels $x_1, \ldots, x_m$: according to Wirtinger method recalled in Section 4,  for each crossing of $\widehat{\beta_w}$ (see Figure \ref{wirtweld})
we have the following relations:

\begin{enumerate}
\item If the crossing is positive  $x_i=x_l$ and
$x_j= x_{l}^{-1}  x_k x_{l} $;
\item If the crossing is negative  $x_i=x_k x_{l} x_k^{-1}$ and
$x_j=x_k$ where the word $u^+(x_k, x_l)$ and $v^+(x_k, x_l)$ are the words defined above;
\item If the crossing is welded  $x_i=x_l$ and
$x_j=x_k$  are the words defined above.
\end{enumerate}

Now let us remark that  we have  that $x_l=u^-(x_k, x_l)$ , $x_{l}^{-1}  x_k x_{l} =v^-(x_k, x_l)$, 
$x_k x_{l} x_k^{-1}=u^+(x_k, x_l)$, $x_k=v^+(x_k, x_l)$, $x_l=u^\circ(x_k, x_l)$ and $x_k=v^\circ(x_k, x_l)$
and let us recall that the action of welded braids is from left to  right  ($\beta_1 \beta_2 (x_i)$ denotes  $((x_i) \beta_1) \beta_2$). Therefore if we label $x_i$ (for $i=1 , \ldots, n$)  the arcs on  the top of the braid $\beta_w$, the arcs on the bottom 
will be labelled by $\beta_w^{-1} (x_i)$ (for $i=1 , \ldots, n$). Since we are considering the closure of $\beta_w$,  we identify  labels on the bottom with corresponding labels on the top and we deduce that a possible presentation of  $G_{K,w}(wK)$ is :
 $$
G_{K,w}(wK)= \langle x_1, x_2, \ldots , x_n~ ||~ x_i = \beta_w^{-1}(x_i),~~i=1, \ldots, n \rangle.
$$
and therefore $G_{K,w}(wK)$ is clearly isomorphic to $G_w(wK)$.
\end{proof}

\begin{figure}[h]
\centering
 \includegraphics[width=10cm,]{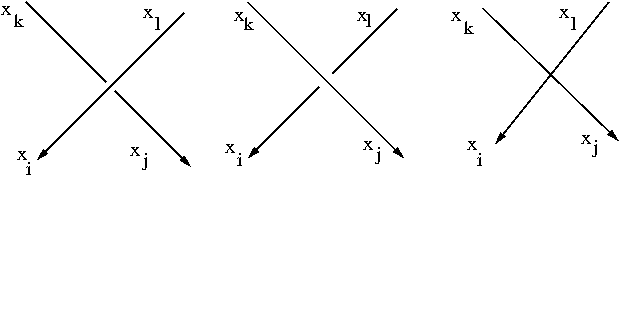}
\caption{Wirtinger-like labelling} \label{wirtweld}
\end{figure}

Let $\mathcal{D}$ be a diagram representing the immersion of a circle in the plane, where double points can
be presented  with two different labelings:
\begin{itemize}
\item with the usual overpasses/underpasses information;
\item as "singular" points.
\end{itemize}
Clearly $\mathcal{D}$ will represent a virtual
knot diagram if we allow virtual local moves or respectively a welded knot diagram if we allow welded local moves.

In the following we will write $G_v(\mathcal{D})$ and   $G_{K,v}(\mathcal{D})$  when we consider  $\mathcal{D}$
as a virtual knot diagram, while we will set   $G_w(\mathcal{D})$ and   $G_{K,w}(\mathcal{D})$ when we see $\mathcal{D}$
as a welded knot diagram.
The following Proposition is a consequence of the fact that Wirtinger labeling is preserved by $F1$ moves.

\begin{prop} \label{weldedvskauffman}
Let  $\mathcal{D}$ be a diagram as above. Then $G_{K,v}(\mathcal{D}) = G_{K,w}(\mathcal{D})$.
\end{prop}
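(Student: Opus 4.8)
The plan is to show that the two group presentations attached to $\mathcal{D}$ coincide \emph{verbatim}, so that equality—and not merely isomorphism—holds. First I would recall that, for the fixed diagram $\mathcal{D}$, both $G_{K,v}(\mathcal{D})$ and $G_{K,w}(\mathcal{D})$ are by definition obtained by applying the Wirtinger method of Section~\ref{gvl} and forgetting the singular crossings: virtual in the first reading, welded in the second. In either reading the generators are in bijection with the arcs of $\mathcal{D}$, where an arc is a maximal piece of the underlying immersed circle running between two consecutive classical undercrossings, and the relations are produced one for each \emph{classical} crossing according to case a) or b) of Figure~\ref{wirtingerfigure}.

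Second I would observe that the generating set is insensitive to the interpretation of the singular crossings. Since these crossings are forgotten in both theories, an arc passes freely through every singular crossing in each case, so the two collections of arcs—and hence the two generating sets—are literally the same. Likewise, each classical crossing carries its over/under and orientation data as part of the fixed diagram $\mathcal{D}$, independently of whether the singular crossings are declared virtual or welded; consequently it contributes exactly the same Wirtinger relation in both presentations. It follows that the presentation defining $G_{K,v}(\mathcal{D})$ is the same, word for word, as the presentation defining $G_{K,w}(\mathcal{D})$, whence $G_{K,v}(\mathcal{D}) = G_{K,w}(\mathcal{D})$.

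The only genuine point to keep in mind is that the two objects are honest invariants of their respective equivalence classes. As a virtual invariant this is the content of the definition recalled in Section~\ref{gvl}; as a welded invariant one additionally needs that the Wirtinger labeling be preserved by the forbidden move $F1$, which is precisely the fact noted at the opening of this section. This $F1$-invariance is what gives the equality of presentations its meaning as an equality of invariants, and it is the substantive observation lurking behind the statement. It plays no role, however, in the identification of the two presentations on the fixed diagram itself: that identification is immediate from the definitions, and so the verification amounts only to checking that forgetting virtual crossings and forgetting welded crossings produce the same arcs and the same classical Wirtinger relations. I therefore do not expect any serious obstacle beyond making this bookkeeping explicit.
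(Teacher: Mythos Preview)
Your proposal is correct and matches the paper's own reasoning, which is even terser: the paper gives no formal proof, only the one-line remark preceding the statement that it ``is a consequence of the fact that Wirtinger labeling is preserved by $F1$ moves.'' Your observation that on a \emph{fixed} diagram the two presentations coincide verbatim---with $F1$-invariance entering only to make $G_{K,w}$ a genuine welded invariant---is exactly right and arguably sharper than the paper's phrasing.
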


Remark also that  if $\mathcal{D}_1$ and $\mathcal{D}_2$ are equivalent as welded diagrams
then $G_{K,v}(\mathcal{D}_1) = G_{K,v}(\mathcal{D}_2)$ even if  $\mathcal{D}_1$ and $\mathcal{D}_2$ are not equivalent as
virtual diagrams.

\medskip

\begin{proof}[Proof of Proposition \ref{isomprincipal}]
Let $\beta_v$ be a virtual braid with closure equivalent to $vK$.
 Let $F_{n+1} = \langle x_1, x_2, \ldots, x_n, y \rangle$  and $F_{n} = \langle x_1, x_2, \ldots, x_n \rangle$.
As in recalled in Proposition \ref{virtualvswelded}, the projection $p_n: F_{n+1} \to F_{n+1} / \langle \langle y  \rangle \rangle \simeq F_n$ induces a surjective map
$p^\#_n : \psi(VB_n) \to  WB_{n}$ and therefore   that $G_v(vK)/\langle \langle y  \rangle \rangle $ is isomorphic to
$G_w(wK) = \langle x_1, x_2, \ldots , x_n~ ||~ x_i = \beta_w(x_i),~~i=1, \ldots, n \rangle$, where  $\beta_w=p^\# (\psi(\beta_v)) $.

The first claim is therefore a straightforward consequence of Proposition \ref{weldedvskauffman}.

The second claim of the Proposition follows easily from the fact that
any generator of type $x_i$ is a conjugated of $x_{\pi(i)}$ where $\pi \in S_n$ is of order $n$. Therefore in the abelianization all generators of type $x_i$ have the same image.  
\end{proof}

From Propositions \ref{isomprincipal} and \ref{isomweld} therefore it follows that:

\begin{prop}
Let  $\mathcal{D}$ be a diagram as above. Then $G_{v}(\mathcal{D})/\langle\langle y \rangle\rangle = G_{w}(\mathcal{D})$.
\end{prop}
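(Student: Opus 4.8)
The plan is to obtain $G_{v}(\mathcal{D})/\langle\langle y \rangle\rangle = G_{w}(\mathcal{D})$ as a formal consequence of the three preceding results, exploiting that the very definition of ``a diagram as above'' lets us read the single diagram $\mathcal{D}$ simultaneously as a virtual and as a welded knot diagram, with the \emph{same} classical crossings in both readings. Thus the whole argument is a chaining of isomorphisms and equalities already established, and I do not expect to need any new computation.

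First I would read $\mathcal{D}$ as a virtual knot diagram. Applying the first part of Proposition~\ref{isomprincipal} to the virtual knot represented by $\mathcal{D}$ identifies the quotient $G_{v}(\mathcal{D})/\langle\langle y \rangle\rangle$ with the Kauffman group $G_{K,v}(\mathcal{D})$; this is the only step that uses the virtual braid representation $\psi$ together with the projection $p_n$ killing $y$ from Proposition~\ref{virtualvswelded}. Next I would invoke Proposition~\ref{weldedvskauffman}, which gives $G_{K,v}(\mathcal{D}) = G_{K,w}(\mathcal{D})$: both Wirtinger constructions discard exactly the crossings carrying no over/under information (virtual, respectively welded), so they leave the identical set of classical relations and hence the same group. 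Finally I would read $\mathcal{D}$ as a welded knot diagram and apply Proposition~\ref{isomweld} to identify $G_{K,w}(\mathcal{D})$ with $G_{w}(\mathcal{D})$. Composing these three identifications yields
$$
G_{v}(\mathcal{D})/\langle\langle y \rangle\rangle \simeq G_{K,v}(\mathcal{D}) = G_{K,w}(\mathcal{D}) \simeq G_{w}(\mathcal{D}),
$$
which is the asserted statement.

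I do not expect a genuine obstacle in this proof: it is purely the concatenation of the three quoted results, two of which supply isomorphisms and the middle one an equality of presentations. The only point that requires (minor) care is to verify that $\mathcal{D}$ meets the hypotheses of all three propositions at once, namely that it is a legitimate knot diagram whose double points split into genuine classical crossings, carrying over/under data common to both interpretations, and ``non-informative'' crossings read either as virtual or as welded. Since this compatibility is precisely what is built into the definition of ``a diagram as above,'' the verification is immediate, and the proposition follows.
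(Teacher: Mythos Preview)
Your proposal is correct and essentially matches the paper's own argument: the paper simply states that the proposition follows from Propositions~\ref{isomprincipal} and~\ref{isomweld}, and your chain $G_v(\mathcal{D})/\langle\langle y\rangle\rangle \simeq G_{K,v}(\mathcal{D}) = G_{K,w}(\mathcal{D}) \simeq G_w(\mathcal{D})$ is exactly the intended unpacking of that sentence (with Proposition~\ref{weldedvskauffman} made explicit as the middle step, which the paper leaves implicit since it was already invoked inside the proof of Proposition~\ref{isomprincipal}).
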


\section{Wada groups for virtual and welded links}

In \cite{W} Wada found several representations of $B_n$ in ${\rm Aut}(F_n)$ which, by the usual braid closure,
provide  group invariants of links. These representation are  of the following special form:
any generator (and therefore its inverse) of $B_n$ acts trivially on generators of $F_n$ except a pair of  generators:

\begin{eqnarray*} 
x_i \cdot \sigma_i=u^+(x_i, x_{i+1}) \,  ,\\
x_{i+1} \cdot \sigma_i=v^+(x_i, x_{i+1})  \,  , \\
x_j \cdot \sigma_i= x_j &\qquad j \not= i, i+1 \, .
\end{eqnarray*}

\begin{eqnarray*} 
x_i \cdot \sigma_i^{-1}=u^-(x_i, x_{i+1})  \,  ,\\
x_{i+1} \cdot \sigma_i^{-1}=v^-(x_i, x_{i+1})  \,  ,\\
x_j \cdot \sigma_i^{-1}= x_j &\qquad j \not= i, i+1 \, .
\end{eqnarray*}

where $u$ and $v$ are now words in the generators $a,b$,  with $\langle a, b \rangle \simeq F_2$.
In \cite{W} Wada found  four families of representations providing group invariants of links (they are types $4-7$ in Wada's paper):

\begin{itemize}
\item Type 1:  $u^+(x_i, x_{i+1})=  x_i^{h} x_{i+1} x_i^{-h}$ and  $v^+(x_i, x_{i+1})=   x_{i} $;
\item Type 2:  $u^+(x_i, x_{i+1})=  x_i x_{i+1}^{-1} x_i$ and  $v^+(x_i, x_{i+1})=   x_{i} $;
\item Type 3:  $u^+(x_i, x_{i+1})=  x_i x_{i+1} x_i$ and  $v^+(x_i, x_{i+1})=   x_{i}^{-1} $;
\item Type 4:  $u^+(x_i, x_{i+1})=  x_i^{2} x_{i+1}$ and  $v^+(x_i, x_{i+1})=   x_{i+1}^{-1}  x_{i}^{-1} x_{i+1} $.
\end{itemize}

As in the case of Artin representation we can ask if these representations extend to welded braids providing
group invariants for welded links. More precisely, let $\chi_k:  G_{WB_n} \to {\rm Aut}(F_n)$ (for $k=1, \ldots, 4$) be the set map from the set of generators $G_{WB_n}:=\{ \sigma_1, \ldots, \sigma_{n-1}, \alpha_1, \ldots, \alpha_{n-1} \}$ of $WB_n$
to  ${\rm Aut}(F_n)$ which associates to any generators $\sigma_i$ the Wada representation of type $k$ and  to any generators $\alpha_i$
the usual automorphism
\begin{eqnarray*} 
x_i \cdot \alpha_i=   x_{i+1}  &:=u^\circ(x_i, x_{i+1}) \\
x_{i+1} \cdot \alpha_i= x_i &:=v^\circ(x_i, x_{i+1}) \\
x_j \cdot \alpha_i= x_j &\qquad j \not= i, i+1 \, .
\end{eqnarray*}

\begin{prop}
The set map $\chi_k:  G_{WB_n} \to {\rm Aut}(F_n)$ (for $k=1, \ldots, 4$) induces a homomorphism
$\chi_k:  WB_n \to {\rm Aut}(F_n)$ if and only if $k=1,2$.
\end{prop}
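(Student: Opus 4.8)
The plan is to apply the standard criterion for a presented group: the set map $\chi_k$ extends to a homomorphism $\chi_k\colon WB_n \to {\rm Aut}(F_n)$ if and only if it respects every defining relation of $WB_n$. So I would go through the relations type by type, reducing everything to a finite computation on three strands.

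First I would dispose of the relations that hold for all four types. The relations among the $\sigma_i$ alone are the braid relations, and these are respected because the restriction of $\chi_k$ to the subgroup generated by the $\sigma_i$ is, by construction, exactly the Wada representation of type $k$ of $B_n$, which is a homomorphism by \cite{W}. The relations among the $\alpha_i$ alone (the involution relations $\alpha_i^2=1$, the braid relations, and $\alpha_i\alpha_j=\alpha_j\alpha_i$ for $|i-j|\geq 2$) hold because each $\alpha_i$ is sent to the transposition automorphism $x_i\leftrightarrow x_{i+1}$, and these generate a copy of $S_n$ inside ${\rm Aut}(F_n)$. The mixed far-commutation relations $\alpha_i\sigma_j=\sigma_j\alpha_i$ with $|i-j|\geq 2$ hold because $\chi_k(\alpha_i)$ and $\chi_k(\sigma_j)$ then modify disjoint pairs of generators of $F_n$. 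Hence $\chi_k$ is a homomorphism if and only if the two remaining families of mixed relations,
$$
\alpha_{i+1}\,\alpha_i\,\sigma_{i+1}=\sigma_i\,\alpha_{i+1}\,\alpha_i \qquad\text{and}\qquad \alpha_i\,\sigma_{i+1}\,\sigma_i=\sigma_{i+1}\,\sigma_i\,\alpha_{i+1},
$$
are satisfied as identities in ${\rm Aut}(F_n)$.

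Next I would note that both of these relations involve only three consecutive indices and act as the identity on the other generators, so it suffices to verify them for $n=3$, $i=1$, in $F_3=\langle x_1,x_2,x_3\rangle$ with generators $\sigma_1,\sigma_2,\alpha_1,\alpha_2$. The relation $\alpha_2\alpha_1\sigma_2=\sigma_1\alpha_2\alpha_1$ holds for every type: it merely expresses that the Wada word defining $\sigma_2$ in the pair $(x_2,x_3)$ is obtained from the word defining $\sigma_1$ in $(x_1,x_2)$ by the relabelling induced by the $3$-cycle $\chi_k(\alpha_2\alpha_1)$, and I would record this by a one-line computation using the left-to-right action convention. Consequently the relation $\alpha_1\,\sigma_2\,\sigma_1=\sigma_2\,\sigma_1\,\alpha_2$ is the only one that can obstruct $\chi_k$ from being a homomorphism, and the whole statement comes down to testing it.

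The heart of the proof is thus to compute the images of $x_1,x_2,x_3$ under $\chi_k(\alpha_1\sigma_2\sigma_1)$ and under $\chi_k(\sigma_2\sigma_1\alpha_2)$ for each type and to compare the resulting reduced words. For types $1$ and $2$ they agree on all three generators, which gives the ``if'' direction. For types $3$ and $4$ they already disagree on $x_1$: for type $3$ one gets $x_1\mapsto x_1^{-1}x_3x_1^{-1}$ on the left but $x_1\mapsto x_1x_3x_1$ on the right, while for type $4$ one gets $x_1\mapsto x_2^{-1}x_1^{-2}x_2x_3$ on the left but $x_1\mapsto x_1^2x_3$ on the right; since these are distinct reduced words in $F_3$, the set map $\chi_k$ fails to respect the relation and so is not a homomorphism for $k=3,4$, giving the ``only if'' direction. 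The only real difficulty here is bookkeeping — keeping the left-to-right composition convention straight and correctly reducing the composite free-group words — rather than any conceptual obstacle; because everything localises to three strands, the verification is finite and mechanical.
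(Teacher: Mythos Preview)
Your proposal is correct and follows essentially the same approach as the paper: the paper's proof says simply that for $k=1,2$ one checks the $WB_n$ relations by a straightforward verification, while for $k=3,4$ the relation $\alpha_i\,\sigma_{i+1}\,\sigma_i=\sigma_{i+1}\,\sigma_i\,\alpha_{i+1}$ is not preserved. You arrive at exactly the same conclusion and the same obstructing relation, only with the intermediate reductions and the explicit free-group computations spelled out.
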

\begin{proof}
For $k=1,2$ the proof  is a straightforward verification that relations of $WB_n$ hold in ${\rm Aut}(F_n)$: for $k=3,4$ it suffices to remark that relation of type $\alpha_i \sigma_{i+1} \sigma_i=  \sigma_{i+1} \sigma_i \alpha_{i+1}$ is not preserved.
 \end{proof}

When $k=1,2$ we can therefore define for any $\beta_w \in WB_n$ the group
$$\mathcal{W}_k(\beta_w)=\langle x_1, x_2, \ldots , x_n~ ||~ x_i = (\chi_k(\beta_w))(x_i),~~i=1, \ldots, n \rangle$$
that we will call Wada group of type $k$ for  $\beta_w \in WB_n$.

\begin{thm}
The Wada group $\mathcal{W}_k$ is a link invariant for $k=1,2$.
\end{thm}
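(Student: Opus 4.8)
The plan is to establish invariance under the three moves that generate welded Markov equivalence, exactly as recalled in the proof of Theorem~\ref{theorem5}: a braid move (a defining relation of $WB_n$), a conjugation in $WB_n$, and a right stabilization of positive, negative or welded type. Throughout I would regard $\mathcal{W}_k(\widehat{\beta_w})$ as the quotient $F_n/\langle\langle\, x^{-1}(x)\chi_k(\beta_w) : x=x_1,\dots,x_n\,\rangle\rangle$ depending only on the automorphism $\chi_k(\beta_w)$; since by the previous Proposition $\chi_k$ is a genuine homomorphism $WB_n\to{\rm Aut}(F_n)$ precisely for $k=1,2$, the relators taken over the generators already force $w^{-1}(w)\chi_k(\beta_w)$ to be trivial for every $w\in F_n$. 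The braid move then causes no change at all: if $\beta_w=\beta_w'$ in $WB_n$ then $\chi_k(\beta_w)=\chi_k(\beta_w')$, so the presentations literally coincide. This is exactly the point where the hypothesis $k=1,2$ is used, since for $k=3,4$ the map $\chi_k$ does not even descend to $WB_n$.

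For the conjugation move I would use the following convention-free observation: for $\eta\in{\rm Aut}(F_n)$, the automorphism $\eta$ maps the normal closure of $\{x^{-1}(x)\phi : x\in F_n\}$ onto the normal closure of $\{x^{-1}(x)(\eta^{-1}\phi\eta) : x\in F_n\}$, and hence induces an isomorphism of the two quotients. Applying this with $\eta=\chi_k(g)$ for a generator $g\in\{\sigma_i,\alpha_i\}$ and $\phi=\chi_k(\beta_w)$, and using that $\chi_k(g\beta_w g^{-1})$ is the $\chi_k(g)$-conjugate of $\chi_k(\beta_w)$, gives $\mathcal{W}_k(\widehat{g\beta_w g^{-1}})\simeq\mathcal{W}_k(\widehat{\beta_w})$ for every generator, covering both the $\sigma_i$ and the $\alpha_i$ cases uniformly. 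Alternatively one may simply repeat verbatim the generator-by-generator computation carried out in Theorem~\ref{theorem4}.

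The stabilization move carries the real content. For a positive or negative stabilization $\beta_w\mapsto\beta_w\sigma_n^{\pm1}\in WB_{n+1}$ the verification is precisely Wada's original argument \cite{W} showing that types $1$--$4$ give classical link invariants: the relation indexed by $x_{n+1}$ lets one eliminate the new generator, since $v^{+}(x_n,x_{n+1})=x_n$ forces $x_{n+1}=x_n$ in the positive case and the analogous computation with $v^{-}$ does so in the negative case; after this substitution the collapse identities $u^{+}(x_n,x_n)=x_n$ and $u^{-}(x_n,x_n)=x_n$ (valid for types $1$ and $2$) make every remaining relation reduce exactly to the defining relations of $\mathcal{W}_k(\widehat{\beta_w})$. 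For a welded stabilization $\beta_w\mapsto\beta_w\alpha_n$ the argument is shorter and does not depend on the Wada type: because $\chi_k(\alpha_n)$ merely transposes $x_n$ and $x_{n+1}$ while $\beta_w$ fixes $x_{n+1}$, the $(n+1)$-st relation reads $x_{n+1}=(x_{n+1})\alpha_n=x_n$, and once $x_{n+1}$ is set equal to $x_n$ the transposition acts as the identity, so the surviving $n$ relations are those of $\mathcal{W}_k(\widehat{\beta_w})$.

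I expect the positive/negative stabilization to be the main obstacle, since it is there that one must track how the word $(x_i)\chi_k(\beta_w)$ is modified when $x_n$ is replaced first by $u^{\pm}(x_n,x_{n+1})$ and then, after eliminating the new generator, by $x_n$. The whole geometric invariance reduces to the two algebraic facts that the $(n+1)$-st relation can be solved as $x_{n+1}=x_n$ and that $u^{\pm}(a,a)=a$, and these are exactly the identities singling out types $1$ and $2$. I would therefore organise the write-up by isolating these identities at the outset, so that the verification of moves 1)--3) becomes a short bookkeeping check.
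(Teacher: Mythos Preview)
Your proposal is correct and follows essentially the same approach as the paper: both handle braid moves via the homomorphism property of $\chi_k$, handle conjugation via the co-invariants viewpoint (the paper phrases this as ``the group of co-invariants is invariant up to conjugation by an automorphism,'' which is exactly your normal-closure observation), and reduce stabilization to the key fact that $(\chi_k(\beta_w\sigma_n))(x_{n+1})=x_n$ for $k=1,2$. Your write-up is in fact more explicit than the paper's (which only sketches the positive stabilization and defers to \cite{W} and Theorem~\ref{theorem4}), in particular because you spell out the welded stabilization and the collapse identity $u^{\pm}(a,a)=a$; the one small overstatement is the closing remark that these identities ``single out'' types $1,2$ --- Wada's types $3,4$ also give classical link invariants via the analogous identity $u^{+}(a,a^{-1})=a$, and what really isolates $k=1,2$ here is the previous Proposition on the extension of $\chi_k$ to $WB_n$.
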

\begin{proof}
One can repeat almost verbatim the arguments from \cite{W} for classical braids. Notice that $\mathcal{W}_k(\beta_w)$
is the group of co-invariants of $\beta_w$, i.e. the maximal quotient of $F_n$ on which  $\chi_k(\beta_w)$ acts trivially.
Since the group of co-invariants is invariant up to conjugation by an automorphism, we deduce that conjugated welded braids have isomorphic Wada groups.
To prove the statement it is therefore sufficient to verify that
$\mathcal{W}_k(\beta_w)=\langle x_1, x_2, \ldots , x_n~ ||~ x_i = (\chi_k(\beta_w))(x_i),~~i=1, \ldots, n \rangle$
and $\mathcal{W}^{stab}_k(\beta_w)=\langle x_1, x_2, \ldots ,  x_{n}x_{n+1} ~ ||~ x_i = (\chi_k(\beta_w \sigma_n))(x_i),~~i=1, \ldots, n+1 \rangle$ are isomorphic: this a straightforward computation similar to the case 3) in Theorem \ref{theorem4}, the key point being that $(\chi_k(\beta_w \sigma_n))(x_{n+1})=x_n$ for $k=1,2$ (see also Section 2 of \cite{W}).
 \end{proof}

In  \cite{ESSSW} Wada representations of type $2, 3, 4$ have been extended to group invariants for virtual links
using a Wirtinger like presentation of virtual link diagrams: contrarily to the classical case these  groups are not necessarily isomorphic.

Analogously it would be interesting to understand the geometrical meaning of
Wada groups of welded links. In this perspective, Proposition \ref{equiv}
shows that Wada representations of type $1$ and $2$ are not equivalent.

We will say that two representations $\omega_1: WB_n \to {\rm Aut}(F_n)$ and $\omega_2: WB_n \to {\rm Aut}(F_n)$ are equivalent, if there exist  automorphisms
$\phi \in {\rm Aut}(F_{n})$ and $\mu : WB_n \to WB_n$ such that
$$
\phi^{-1} \, \omega_1(\beta_w) \, \phi = \omega_2(\mu(\beta_w)) ,
$$
for any $\beta_w  \in G_{WB_n}$.

\begin{prop} \label{equiv}
Wada representations of type $1$ and $2$ are not equivalent.
\end{prop}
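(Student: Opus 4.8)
The plan is to pass to the abelianization $F_n^{\mathrm{ab}}\cong\mathbb{Z}^n$ and compare the two resulting linear representations. Abelianizing turns each $\omega_k$ into a homomorphism $\bar\omega_k:WB_n\to GL_n(\mathbb{Z})$. Since both sides of the defining equation $\phi^{-1}\omega_1(\beta_w)\phi=\omega_2(\mu(\beta_w))$ are homomorphisms in $\beta_w$ that agree on the generating set $G_{WB_n}$, an equivalence would in fact hold for every $\beta_w\in WB_n$; abelianizing it yields
\[
P^{-1}\,\bar\omega_1(\beta_w)\,P=\bar\omega_2(\mu(\beta_w)),\qquad P:=\phi_{\mathrm{ab}}\in GL_n(\mathbb{Z}),
\]
for all $\beta_w$. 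First I would compute the linear action of a generator $\sigma_i$. For type $1$ one has $x_i\mapsto x_i^{h}x_{i+1}x_i^{-h}$ and $x_{i+1}\mapsto x_i$, so $\bar\omega_1(\sigma_i)$ is exactly the transposition matrix of $(i,i+1)$, independently of $h$; the same holds for $\bar\omega_1(\alpha_i)$. For type $2$ one has $x_i\mapsto x_ix_{i+1}^{-1}x_i$ and $x_{i+1}\mapsto x_i$, so on the $\{x_i,x_{i+1}\}$ block $\bar\omega_2(\sigma_i)$ is $\left(\begin{smallmatrix}2&1\\-1&0\end{smallmatrix}\right)$, a nontrivial unipotent matrix.

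The crucial observation is a dichotomy at the level of images. Every generator of $WB_n$ is sent by $\bar\omega_1$ to a permutation matrix, so $\bar\omega_1(WB_n)$ lies inside the finite group $S_n\subset GL_n(\mathbb{Z})$ of permutation matrices; in particular every element of $\bar\omega_1(WB_n)$ has finite order. By contrast, writing $\bar\omega_2(\sigma_i)=I+N$ with $N\ne 0$ nilpotent ($N^2=0$) gives $\bar\omega_2(\sigma_i)^m=I+mN$, so $\bar\omega_2(\sigma_i)$ has infinite order and $\bar\omega_2(WB_n)$ is infinite. It is worth noting that the coarser determinant invariant does \emph{not} suffice here: the characters $\det\circ\bar\omega_1$ and $\det\circ\bar\omega_2$ can be reconciled by twisting $\mu$ on $WB_n^{\mathrm{ab}}\cong\mathbb{Z}\oplus\mathbb{Z}/2$ so as to send the class of $\sigma_1$ to that of $\sigma_1\alpha_1$, so one genuinely needs the finite-versus-infinite image distinction rather than a determinant obstruction.

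Finally I would derive the contradiction. Since $\mu$ is an automorphism of $WB_n$ it is surjective, whence
\[
\bar\omega_2(WB_n)=\bar\omega_2(\mu(WB_n))=P^{-1}\,\bar\omega_1(WB_n)\,P,
\]
a conjugate of the finite group $\bar\omega_1(WB_n)$ and therefore finite. This contradicts the fact that $\bar\omega_2(WB_n)$ contains the infinite-order element $\bar\omega_2(\sigma_1)$. Hence no pair $(\phi,\mu)$ can exist and the two representations are not equivalent. The only real content of the argument is the computation of the two abelianized actions together with the recognition that the conjugation- and $\mu$-invariant quantity one should track is the (in)finiteness of the image; once this is identified the proof is immediate and uniform in $n\ge 2$ and in the parameter $h$. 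The main obstacle is thus conceptual rather than computational, namely isolating this invariant after observing that the determinant character alone fails.
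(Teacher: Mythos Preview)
Your proof is correct and follows essentially the same approach as the paper: both arguments pass to the induced action on $H_1(F_n)\cong\mathbb{Z}^n$ and observe that type~$1$ yields automorphisms of finite order (permutation matrices) while type~$2$ produces an infinite-order (unipotent) element, which is incompatible with any equivalence $(\phi,\mu)$. Your write-up is in fact more explicit than the paper's, spelling out the matrix $\left(\begin{smallmatrix}2&1\\-1&0\end{smallmatrix}\right)$, the nilpotent decomposition, and the role of the surjectivity of~$\mu$; the side remark about the determinant character is a nice addition but not needed for the result.
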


\begin{proof}
The proof is the same as in Proposition A.1 of \cite{CP}: the claim follows from considering
 the induced action on $H_1(F_n)$. Under Wada representations of type $1$ a welded braid has evidently  finite order as automorphism of
 $H_1(F_n)$ while we have that $\chi_2(\sigma_i^t)[x_i]=(t+1)[x_i] -t[x_2]$ for all $t \in \N$, where $[u]$ denote the equivalence class in
 $H_1(F_n)$ of an element $u \in F_n$.
\end{proof}

\end{document}